\newtheorem{theorem}{Theorem}[section]
\newtheorem{corollary}[theorem]{Corollary}
\newtheorem{proposition}[theorem]{Proposition}
\newtheorem{lemma}[theorem]{Lemma}
\newtheorem{remark}[theorem]{Remark}
\def\irr#1{{\rm Irr}(#1)}
\def\irrr#1#2 {\irr {#1 \mid #2}}
\newcommand{\R}{\mathbb R}
    \def\Xint#1{\mathchoice
       {\XXint\displaystyle\textstyle{#1}}%
       {\XXint\textstyle\scriptstyle{#1}}%
       {\XXint\scriptstyle\scriptscriptstyle{#1}}%
       {\XXint\scriptscriptstyle\scriptscriptstyle{#1}}%
       \!\int}
    \def\XXint#1#2#3{{\setbox0=\hbox{$#1{#2#3}{\int}$}
         \vcenter{\hbox{$#2#3$}}\kern-.5\wd0}}
    \def\dashint{\Xint-}
\begin{document}

\global\long\def\RR{\mathbb{R}}%

\global\long\def\SS{\mathbb{S}}%

\global\long\def\oo{\mathbf{1}}%

\global\long\def\dx{\mathrm{d}x}%

\global\long\def\dd{\mathrm{d}}%

\global\long\def\var{\operatorname{Var}}%

\global\long\def\cov{\operatorname{Cov}}%

\global\long\def\div{\operatorname{div}}%

\global\long\def\tr{\operatorname{tr}}%

\global\long\def\II{\operatorname{II}}%

\global\long\def\proj{\operatorname{Proj}}%

\global\long\def\nn#1{\left\Vert #1\right\Vert }%

\global\long\def\PP{\mathbb{P}}%

\global\long\def\EE{\mathbb{E}}%

\title[Stability in the B-theorem]{Stability and the equality case in the B-theorem}
\author[Orli Herscovici, Galyna V. Livshyts, Liran Rotem, Alexander Volberg]{Orli Herscovici, Galyna V. Livshyts, Liran Rotem, Alexander Volberg}

\address{School of Mathematics, Georgia Institute of Technology, Atlanta, GA} \email{oherscovici3@gatech.edu}

\address{School of Mathematics, Georgia Institute of Technology, Atlanta, GA} \email{glivshyts6@math.gatech.edu}

\address{Faculty of Mathematics, Technion -- Israel Institute of Technology, Haifa, Israel} 
\email{lrotem@technion.ac.il}

\address{School of Mathematics, Michigan State University, East Lansing, MI} 
\email{volberg@msu.edu}

\begin{abstract} In this paper, we show the stability, and characterize the equality cases in the strong B-inequality of Cordero-Erasquin, Fradelizi and Maurey \cite{B-conj}. As an application, we establish uniqueness of Bobkov's maximal Gaussian measure position from \cite{Bobkov-Mpos}.
\end{abstract}
\maketitle

\section{Introduction}

Let $\gamma$ denote the standard Gaussian measure on $\R^n$ with density $(2\pi)^{-n/2} e^{-\frac{|x|^2}{2}}$. We say that a set $K$ in $\R^n$ is \emph{symmetric} if $K=-K$, i.e. for all $x \in K$ we have $-x \in K$. 

The B-theorem of Cordero-Erausquin, Fradelizi and Maurey \cite{B-conj} states that for every symmetric convex set $K\subset\R^n$, and 
every $a,b>0$,
\begin{equation}
\gamma\left(\sqrt{ab}K\right)\geq\sqrt{\gamma(aK)\gamma(bK)}. \label{B-statement}
\end{equation}
In other words, $\gamma(e^t K)$ is log-concave in $t$ when $K$ is a convex symmetric set.

The inequality (\ref{B-statement}) was first conjectured by Lata{\l}a in \cite{Latala2002}, who attributed the question to Banaszczyk. 
It is a strengthening of the inequality 
$$ \gamma\left(\frac{a+b}{2}K\right)\geq\sqrt{\gamma(aK)\gamma(bK)}, $$
which holds with no symmetry assumption and follows from the Pr\'{e}kopa--Leindler inequality \cite{Prekopa1971}, \cite{Leindler1972} -- see also Borell \cite{Borell1974, Borell1975} and Brascamp and Lieb \cite{BrLi}.

More generally, for a vector $x\in\R^n,$ we will use the notation
\begin{equation}\label{e^xK}
e^x K=\{(e^{x_1}y_1,...,e^{x_n}y_n):\,\, (y_1,...,y_n)\in K\}.
\end{equation}
The strong version of the B-theorem then states that for any pair $x,y\in\R^n,$
\begin{equation}\label{B-strong}
\gamma\left(e^{\frac{x+y}{2}}K\right)\geq\sqrt{\gamma(e^x K)\gamma(e^y K)}.
\end{equation}

Nayar and Tkocz \cite{Nayar2012} showed that the assumption that $K$ is symmetric is necessary and (\ref{B-statement}) does not necessarily 
hold if one replaces it with the weaker assumption $0 \in K$. A natural question, raised in \cite{B-conj}, is what other measures satisfy 
an inequality such as (\ref{B-statement}) or (\ref{B-strong}). In dimension $n=2$ it follows from the works of B\"or\"oczky, Lutwak, Yang
and Zhang \cite{Boroczky2012} and of Saroglou \cite{Saroglou2016} that all even log-concave measures satisfy (\ref{B-statement}). 
For uniform measures on convex bodies this was verified independently by Livne Bar-On \cite{LivneBar-On2013}. In arbitrary dimensions, 
Eskenazis, Nayar and Tkocz \cite{Eskenazis2018} proved that (\ref{B-strong}) holds for certain Gaussian mixtures, and in 
\cite{Cordero-Erausquin2021} it was proved in particular that all rotation-invariant log-concave measures satisfy (\ref{B-strong}). 

In this paper we will concentrate solely on the Gaussian case, and prove a stability version of this celebrated inequality.  Recall that the in-radius $r(K)$ of a 
symmetric convex body $K$ is the largest number $r>0$ such that $rB^n_2\subset K$, where $B^n_2$ denotes the unit Euclidean ball 
centered at the origin. Our theorem then reads:
\begin{theorem}\label{main-thm-global}
Suppose $0\leq a<b<\infty$ and let $K$ be a symmetric convex body. Suppose that
$$\gamma(\sqrt{ab}K)\leq\sqrt{\gamma(aK)\gamma(bK)}(1+\epsilon)$$
for small enough $\epsilon>0$. Then either the in-radius $r(K)$ satisfies
$$r(K)\geq \frac{1}{b} \sqrt{\log \left(\frac{c \log(b/a)^2}{n^2 \epsilon}\right)},$$ 
or 
$$r(K)\leq \frac{C\sqrt{n}}{a} \epsilon^{\frac{1}{n+1}}\left(\log({b}/{a})\right)^{-\frac{2}{n+1}}.$$
\end{theorem}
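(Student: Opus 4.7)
The plan is to reduce the global stability hypothesis to an infinitesimal inequality at a single, well-chosen scale, and then to prove a quantitative rigidity dichotomy for that infinitesimal inequality.

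Set $u(r) := \log \gamma(e^r K)$ for $r \in \R$; this is concave by the B-theorem. The hypothesis reads
$$\Delta := u\!\left(\tfrac{\log a + \log b}{2}\right) - \tfrac{1}{2}\bigl(u(\log a) + u(\log b)\bigr) \leq \log(1+\epsilon) \leq \epsilon,$$
and a short integration-by-parts calculation yields the representation
$$\Delta = \tfrac{1}{2}\int_{\log a}^{\log b} \min\!\bigl(r - \log a,\ \log b - r\bigr)\bigl(-u''(r)\bigr)\,dr.$$
Since $-u'' \geq 0$, averaging on the middle third of $[\log a, \log b]$ produces a point $r_0$ with $a < e^{r_0} < b$ and
$$-u''(r_0) \leq \frac{C\epsilon}{(\log(b/a))^2} =: \eta.$$
Differentiating under the integral sign and changing variables via $x = e^r y$ gives
$$-u''(r) = 2\,\E_{\gamma|_{K_r}}|X|^2 - \var_{\gamma|_{K_r}}|X|^2, \qquad K_r := e^r K,$$
whose non-negativity is precisely the infinitesimal form of the B-inequality. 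Writing $M := e^{r_0} K$, the theorem reduces to the following rigidity statement: if the symmetric convex body $M \subset \R^n$ satisfies $D(M) := 2\,\E_{\gamma|_M}|X|^2 - \var_{\gamma|_M}|X|^2 \leq \eta$, then either $r(M) \geq \sqrt{\log(c/(n^2 \eta))}$ or $r(M) \leq C\sqrt n\, \eta^{1/(n+1)}$; these alternatives translate back to the theorem via $r(M)/b \leq r(K) \leq r(M)/a$.

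To prove the dichotomy I would split according to the Gaussian mass of $M$. When $\gamma(M)$ is close to one, expanding $\gamma|_M$ perturbatively around the unconditioned Gaussian and using $M^c \subset \{|X| > r(M)\}$ produces, after computing $\E_{\gamma|_M}|X|^2$ and $\E_{\gamma|_M}|X|^4$ to first order in $\gamma(M^c)$, a lower bound of the form $D(M) \gtrsim (r(M)^2 - n)^2 \gamma(M^c)$. Combined with the $\chi_n$ tail bound $\gamma(M^c) \lesssim r(M)^{n-2} e^{-r(M)^2/2}$ and $D(M) \leq \eta$, this forces $r(M)^2 \gtrsim \log(c/(n^2 \eta))$, yielding the first alternative. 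When $\gamma(M)$ is bounded away from one, I would use the direction $u$ realizing the in-radius: $M$ lies in the slab $\{|\langle x, u\rangle| \leq r(M)\}$, and a one-dimensional truncated-Gaussian computation in that direction contributes a term of order $r(M)^2$ to $D(M)$. Combining this spectral lower bound with the volumetric lower bound $\gamma(M) \gtrsim (r(M)/\sqrt n)^n$ coming from $r(M) B_2^n \subset M$, and interpolating, gives $r(M)^{n+1} \lesssim n^{(n+1)/2}\eta$, which is the second alternative.

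The main obstacle is the small-mass case. The $\epsilon^{1/(n+1)}$ exponent in the upper bound on $r(K)$ is visibly weaker than the $\sqrt{\epsilon}$ that a one-dimensional slab or ball computation would suggest, and producing it seems to require a genuine interpolation between the Poincar\'e-type spectral content of the B-inequality (used in the proof of \cite{B-conj}) and a purely volumetric estimate on $\gamma(M)$. Making the interpolation uniform over all symmetric convex bodies, rather than only extremal ones such as slabs or balls, and honestly tracking the $n$-dependence through both steps of the dichotomy, is where I expect the bulk of the technical work to live.
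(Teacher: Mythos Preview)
Your reduction to the local deficit $D(M):=2\,\E_{\gamma|_M}|X|^2-\var_{\gamma|_M}|X|^2\le\eta$ at an intermediate scale $M=e^{r_0}K$ is correct and is exactly what the paper does (its integral representation of the midpoint gap is your integration-by-parts identity). The divergence is entirely in the dichotomy step.

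Two concrete issues with your sketch. First, in the large-mass case you invoke an \emph{upper} bound $\gamma(M^c)\lesssim r^{n-2}e^{-r^2/2}$, but the inequality you need points the other way; the relevant lower bound comes not from $M^c\subset(rB_2^n)^c$ but from the slab containment $M\subset\{|\langle x,u\rangle|\le r\}$, giving $\gamma(M^c)\gtrsim e^{-r^2/2}/r$. Second, and more seriously, in the small-mass case the claim that the slab direction ``contributes a term of order $r^2$ to $D(M)$'' is unjustified: $D(M)$ does not split additively over orthogonal directions, because of the cross term $2\cov_{\gamma|_M}\bigl(\langle X,u\rangle^2,\,|X|^2-\langle X,u\rangle^2\bigr)$, which need not vanish or even be nonnegative when $M$ is not a product. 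For slabs and balls one does get $D(M)\asymp r^2$, which is why you see $\sqrt\epsilon$ in those examples, but a uniform lower bound over all symmetric convex bodies is precisely the missing lemma, and your proposed ``interpolation'' with the volumetric bound $\gamma(M)\gtrsim(r/\sqrt n)^n$ does not supply it.

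The paper's route to the dichotomy is not via a direct lower bound on $D(M)$. It reruns the $L^2$/Bochner proof of the symmetric Poincar\'e inequality with the Neumann solution of $Lu=|x|^2-\dashint_M|x|^2$, keeping the two discarded nonnegative remainders: the Hessian defect $\dashint_M\|\nabla^2u\|^2-\dashint_M|\nabla u|^2$ and the Cauchy--Schwarz defect $\dashint_M|\sqrt2\,\nabla u+\sqrt2\,x|^2$. Combined with a Gaussian trace inequality on $K\supset rB_2^n$, these control $\int_{\partial M}\langle e_i,n_x\rangle^2\,d\gamma_{\partial M}$ for each $i$; summing over $i$ yields
\[
\gamma^+(\partial M)\ \le\ Cn^2\eta\,\gamma(M)\Bigl(\frac{\gamma^+(\partial M)}{\int_{rB_2^n}|x|^2\,d\gamma}+\frac1r\Bigr).
\]
The dichotomy is then extracted from this inequality by a separate technical proposition that lower-bounds $\gamma^+(\partial M)$ via the Gaussian isoperimetric inequality and crudely bounds $\gamma(M)$ and the ball integral. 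So the ``interpolation'' you correctly anticipate is between a \emph{boundary} quantity (Gaussian perimeter) and volumetric ones, mediated by isoperimetry---not a direct spectral-versus-volume tradeoff on $D(M)$ itself.
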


\begin{remark} We note that our estimate for $r$ is essentially sharp: indeed, consider first a case when $K$ is a symmetric strip $S_R$ of width $2R$, that is
$$S_R=\{x=(x_1,\ldots,x_n)\in\RR^n\,:\,x_1\in [-R,R]\}.$$
Note that 
\[
\gamma(S_R)=\frac{1}{\sqrt{2\pi}}\int_{-R}^Re^{-\frac{s^2}{2}}ds=1-\frac{2}{\sqrt{2\pi}}\int_R^\infty e^{-\frac{s^2}{2}}ds.
\]
Suppose $0< a\leq b\leq\infty$ and there exists $\epsilon>0$ such that $$\gamma(\sqrt{ab}S_R)=\sqrt{\gamma(aS_R)\gamma(bS_R)}(1+\epsilon).$$ 
Relaxing the upper and lower bound obtained by Szarek and Werner \cite{Szarek1999} for Komatsu inequality, we have that
\[
\frac{1}{R+1}\,e^{-\frac{R^2}{2}}\leq\int_{R}^{\infty}e^{-\frac{s^2}{2}}ds\leq\frac{1}{R}\,e^{-\frac{R^2}{2}}.
\]
Let $F(t)=\frac{2}{\sqrt{2\pi}}\int_{tR}^\infty e^{-\frac{s^2}{2}}\,ds$ for  $t>0$. Then
\[
\epsilon=\frac{\gamma(S_{\sqrt{ab}R})}{\sqrt{\gamma(S_{aR})\gamma(S_{bR})}}-1=\frac{1-F(\sqrt{ab})}{\sqrt{(1-F(a))(1-F(b))}}-1,
\]
and, since $F(a)>F(\sqrt{ab})>F(b)$, we get
\[
\epsilon\leq\frac{1-F(\sqrt{ab})}{1-F(a)}-1\leq\frac{F(a)}{1-F(a)},
\]
which leads to 
\[
\frac{\epsilon}{1+\epsilon}\leq
F(a)=\frac{2}{\sqrt{2\pi}}\int_{aR}^\infty e^{-\frac{s^2}{2}}ds\leq\frac{2}{\sqrt{2\pi}aR}\,e^{-\frac{a^2R^2}{2}}.
\] 
Thus, 

\[
\frac{\epsilon}{1+\epsilon}\leq
\frac{2}{\sqrt{2\pi}} \frac{1}{aR}e^{-\frac{a^2R^2}{2}},
\]
or, 
\[
R\leq C\,\sqrt{\log\left(1+\frac{1}{\epsilon}\right)}.
\]

However, the upper estimate on $r$ in Theorem \ref{main-thm-global} is likely not sharp: indeed, by considering $K=rB^n_2$ to be a very small ball, we only get that $r(K)\geq C\sqrt{\epsilon}$, where $\epsilon$ is the corresponding deficit in the B-inequality. 

\end{remark}

Theorem \ref{main-thm-global} implies:
\begin{corollary}\label{key-cor}
Suppose $0\leq a<b<\infty$ and $K$ is a symmetric convex set such that $\gamma(\sqrt{ab}K)=\sqrt{\gamma(aK)\gamma(bK)}$. 
Then either $K=\R^n$ or $K$ has an empty interior.
\end{corollary}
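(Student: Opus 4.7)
The plan is to deduce Corollary \ref{key-cor} as a direct limiting consequence of Theorem \ref{main-thm-global} by letting $\epsilon\to 0^{+}$. Assume, for contradiction, that $K\neq\R^{n}$ and $K$ has nonempty interior, so that the in-radius satisfies $0<r(K)<\infty$. The strategy is to show that the two alternative conclusions of Theorem \ref{main-thm-global} cannot simultaneously survive a limit $\epsilon\to 0^{+}$ under the hypothesis $0<r(K)<\infty$.

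First I would treat the case in which $K$ is bounded, hence a symmetric convex body. The equality $\gamma(\sqrt{ab}K)=\sqrt{\gamma(aK)\gamma(bK)}$ trivially implies the hypothesis of Theorem \ref{main-thm-global} for every $\epsilon>0$. Thus, for each such $\epsilon$, one of the two alternatives for $r(K)$ holds. Since $r(K)$ is a single fixed number, by pigeonhole there must exist a sequence $\epsilon_{k}\to 0^{+}$ along which the same alternative holds for all $k$. The first alternative would give
$$r(K)\;\geq\;\frac{1}{b}\sqrt{\log\!\left(\frac{c\log(b/a)^{2}}{n^{2}\epsilon_{k}}\right)}\;\longrightarrow\;\infty,$$
contradicting $r(K)<\infty$. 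The second alternative would give
$$r(K)\;\leq\;\frac{C\sqrt{n}}{a}\,\epsilon_{k}^{1/(n+1)}\bigl(\log(b/a)\bigr)^{-2/(n+1)}\;\longrightarrow\;0,$$
contradicting $r(K)>0$. Either way, a contradiction.

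To handle the possibly unbounded case -- which is needed because Theorem \ref{main-thm-global} is stated only for bodies -- I would truncate. Set $K_{R}=K\cap RB^{n}_{2}$, a symmetric convex body. For $R\geq r(K)$ one has $r(K_{R})=r(K)$, and by monotone convergence $\gamma(tK_{R})\uparrow\gamma(tK)$ for every $t>0$. Hence for any prescribed $\delta>0$, all sufficiently large $R$ satisfy
$$\gamma(\sqrt{ab}K_{R})\;\leq\;\gamma(\sqrt{ab}K)\;=\;\sqrt{\gamma(aK)\gamma(bK)}\;\leq\;(1+\delta)\sqrt{\gamma(aK_{R})\gamma(bK_{R})}.$$
Applying Theorem \ref{main-thm-global} to $K_{R}$ with $\epsilon=\delta$ and then letting $\delta\to 0^{+}$ triggers the same dichotomy for $r(K)=r(K_{R})$, again yielding the two contradictory bounds on the fixed positive finite number $r(K)$.

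No serious obstacle is anticipated: the result is essentially a direct pigeonhole/limiting consequence of Theorem \ref{main-thm-global}. The only mildly technical point is the truncation needed when $K$ is unbounded, which is a routine approximation that preserves both the in-radius and (in the limit) the equality case.
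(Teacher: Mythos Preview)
Your proposal is correct and follows essentially the same approach as the paper: apply Theorem \ref{main-thm-global} for arbitrary $\epsilon>0$ and let $\epsilon\to 0^{+}$ to force either $r(K)=\infty$ or $r(K)=0$. Your version is in fact slightly more careful than the paper's, since you explicitly treat the unbounded case by truncation (Theorem \ref{main-thm-global} is stated for bodies, while the corollary concerns general symmetric convex sets), a detail the paper's three-line proof glosses over.
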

\begin{proof}
We can apply Theorem \ref{main-thm-global} with an arbitrary $\epsilon > 0$. Letting $\epsilon \to 0$ we see that either 
$r(K)=\infty$ so $K=\R^n$, or $r(K)=0$ so $K$ has an empty interior. 
\end{proof}

We also prove a stability version for the strong B-theorem. Given a convex body $K$ and a vector $x\in\partial K$, 
the outer unit normal to $\partial K$ (i.e. the Gauss map) at $x$ will be denoted by $n_x=(n^1_x, ..., n^n_x)$. 
It is well known (see e.g. \cite{Schneider2013}) that $n_x$ is uniquely defined almost everywhere on $\partial K$. Recall that for an $(n-1)$-dimensional surface $M,$ $\gamma^+(M)$ denotes the Gaussian perimeter of $M.$ Our theorem reads: 

\begin{theorem}\label{main-thm-strong-global}
Fix parameters $\delta, \alpha, \beta > 0$ Fix $x,y\in\R^n$ such that $|e^x|\leq |e^y|$ (where we use the notation
$e^x=(e^{x_1},...,e^{x_n})$, as well as (\ref{e^xK})). Let $K$ be a symmetric convex body, and suppose that
$$\gamma(e^{\frac{x+y}{2}}K)\leq\sqrt{\gamma(e^x K)\gamma(e^y K)}(1+\epsilon)$$ 
for small enough $\epsilon>0$. Consider
$$\sigma^{\delta}=\{i\in [n]:\, |x_i-y_i|\geq \delta\},$$
and let
$$\Omega_{\delta,\alpha}(K)=\bigl\{x\in \partial K:\,\sum_{i\in\sigma^{\delta}} (n^i_x)^2\geq \alpha\bigr\}.$$
Then either there exists a vector $z\in [x,y]$ such that
$$\gamma^+(\Omega_{\delta, \alpha}(e^z K))\leq \beta\gamma^+(\partial (e^z K)),$$
or
$$r(K)\geq |e^y|^{-1}\sqrt{\log \frac{\delta^2\alpha \beta }{\epsilon n^2}},$$ 
or 
$$r(K)\leq C |e^x|^{-1} \sqrt{n} \epsilon^{\frac{1}{n+1}}\left(\delta^2\alpha\beta \right)^{-\frac{1}{n+1}}.$$
\end{theorem}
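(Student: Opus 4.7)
My plan is to mirror the argument of Theorem \ref{main-thm-global}, but with the isotropic dilation replaced by the linear path $t\mapsto e^{z_t}$, $z_t:=x+t(y-x)$, and with the concavity deficit localised to the boundary piece $\Omega_{\delta,\alpha}$. I would start by setting $v:=y-x$ and $f(t):=\log\gamma(e^{z_t}K)$; by the strong B-theorem \eqref{B-strong} $f$ is concave on $[0,1]$, and the hypothesis rewrites as $\tfrac12(f(0)+f(1))-f(\tfrac12)\le\log(1+\e)\lesssim \e$, which via the trapezoidal second-Taylor remainder gives
\[
\int_0^1 s(1-s)\,(-f''(s))\,ds \;\lesssim\; \e.
\]

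The heart of the argument is then to lower-bound $-f''(t)$ by a boundary integral on $\partial(e^{z_t}K)$. I would revisit the Cordero-Erausquin--Fradelizi--Maurey proof of the strong B-theorem and, after pulling $e^{z_t}K$ back to $K$ via the anisotropic dilation $u\mapsto e^{z_t}u$ and integrating by parts, separate $-f''(t)$ into a nonnegative interior piece (an even Brascamp--Lieb / Poincar\'e quantity) and a nonnegative boundary piece attached to $v$. Dropping the interior contribution gives
\[
-f''(t)\;\ge\;\frac{1}{\gamma(e^{z_t}K)}\int_{\partial(e^{z_t}K)}\Bigl(\sum_{i=1}^n v_i^2 (n^i_w)^2\Bigr)d\gamma^+(w).
\]
On $\Omega_{\delta,\alpha}(e^{z_t}K)$ the integrand is at least $\delta^2\alpha$ by the very construction of $\sigma^\delta$ and $\Omega_{\delta,\alpha}$, so $-f''(t)\ge\delta^2\alpha\,\gamma^+(\Omega_{\delta,\alpha}(e^{z_t}K))$. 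Assuming the first alternative of the theorem fails --- so $\gamma^+(\Omega_{\delta,\alpha}(e^zK))>\beta\gamma^+(\partial(e^zK))$ for every $z\in[x,y]$ --- combining with the deficit bound yields
\[
\e\;\gtrsim\;\delta^2\alpha\beta\int_0^1 s(1-s)\,\gamma^+(\partial(e^{z_t}K))\,ds.
\]

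From here I would run the Gaussian-perimeter dichotomy already used to close Theorem \ref{main-thm-global}: the standard upper bound for the perimeter of a dilated symmetric convex body decays like $\exp(-\tfrac12 r(e^{z_t}K)^2)$, and since $r(e^{z_t}K)\ge(\min_i e^{z_{t,i}})\,r(K)$, the regime $r(K)\ge |e^y|^{-1}\sqrt{\log(\delta^2\alpha\beta/(\e n^2))}$ makes the previous inequality automatic; otherwise, the matching reverse perimeter lower bound for small symmetric convex sets forces $r(K)\le C|e^x|^{-1}\sqrt n\,\e^{1/(n+1)}(\delta^2\alpha\beta)^{-1/(n+1)}$. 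The hardest step I expect is the local inequality in the second paragraph: one must coax the \emph{anisotropic} integrand $\sum_i v_i^2(n_w^i)^2$ out of the CEF argument rather than its cruder isotropic counterpart $|v|^2$, since only the former interacts with $\sigma^\delta$ to produce the crucial $\delta^2\alpha$ factor. A secondary, bookkeeping-type subtlety will be to propagate the factors $|e^x|$ and $|e^y|$ consistently through the perimeter estimates, which is what forces their asymmetric appearance in the two in-radius conclusions.
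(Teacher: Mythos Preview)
Your overall strategy---reduce the global deficit to a second-derivative bound along the segment $[x,y]$, then convert that into a boundary estimate that sees $\sum_{i\in\sigma^\delta}(n^i_w)^2$, then feed the result into a large-$r$/small-$r$ dichotomy---matches the paper. The difference, and the gap, is entirely in the ``hardest step'' you flag yourself.

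The inequality
\[
-f''(t)\;\ge\;\frac{1}{\gamma(e^{z_t}K)}\int_{\partial(e^{z_t}K)}\sum_{i=1}^n v_i^2\,(n^i_w)^2\,d\gamma^+(w)
\]
is false as stated, and the CEF argument does not produce it. A one-dimensional check already kills it: for the strip $K=\{|w_1|\le R\}$ with $v=e_1$, one computes $-f''(0)=\tfrac{2}{3}R^2+O(R^4)$ as $R\to0$, while the right-hand side is $\sim 1/R$. More conceptually, the Bochner/H\"ormander decomposition behind CEF gives
\[
-f''(t)=\Bigl(\text{interior Poincar\'e-type terms}\Bigr)+\dashint_{\partial K_t}\langle \II\,\nabla_{\partial K_t}u,\nabla_{\partial K_t}u\rangle\,d\gamma^+,
\]
where $u$ solves the Neumann problem $Lu=\langle Dw,w\rangle+c$, $\partial_n u=0$. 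The boundary term involves the (implicit) $\nabla_{\partial K}u$ and the second fundamental form, not the normal components $n^i_w$. Getting from the former to the latter is precisely the content of the paper's Sections~4--5: one first shows (stability in Poincar\'e, Theorem~\ref{gauss-poin-stab}) that $\partial_i u$ is $W^{1,2}$-close to a linear function $\langle w,t_i\rangle$, and then uses the Gaussian trace inequality (Theorem~\ref{GaussGarg}) to transfer that closeness to $\int_{\partial K}\langle t_i,n_w\rangle^2\,d\gamma^+$. This route unavoidably introduces the factor $\bigl(\tfrac{1}{r}+\tfrac{\gamma^+(\partial K)}{\int_{rB_2^n}|x|^2\,d\gamma}\bigr)$ on the right-hand side of the boundary estimate (equation~\eqref{keyconcl} in the paper), and it is exactly this two-term factor that feeds into Proposition~\ref{prop:technical-estimate} to produce the $r$-dichotomy. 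Your proposed shortcut would yield a strictly stronger bound without these factors---which is why it cannot hold.

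So: keep your reduction to a pointwise second-derivative bound (the paper does the same via Lemma~\ref{local-global}, picking a single $z\in[x,y]$ rather than integrating), but replace the direct boundary inequality by the Poincar\'e-stability + trace argument of Theorem~\ref{stab-poin-sym}. After that, your endgame is correct in spirit, but it is Proposition~\ref{prop:technical-estimate} (which handles both terms $\gamma(K)/\!\int_{rB_2^n}|x|^2\,d\gamma$ and $\gamma(K)/(r\gamma^+(\partial K))$) that closes the dichotomy, not a bare perimeter estimate.
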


As a corollary, we shall deduce:

\begin{corollary}\label{strong-equal}
Let $x, y\in\R^n$, set $\sigma_{x,y}=\{j\in [n]:\, x_j\neq y_j\}$, and let $K$ be a symmetric convex set in $\R^n.$ Then
$$\gamma\left(e^{\frac{x+y}{2}}K\right)=\sqrt{\gamma\left(e^x K\right)\gamma\left(e^y K\right)},$$
if and only if either $K$ has an empty interior, or $K=\R^n,$ or (more generally) $K=K_0\times H_{x,y},$ with $K_0\subset H_{x,y}^{\perp},$ where $H_{x,y}=\{z\in\R^n: z_j=0\,\, \forall j\in\sigma_{x,y}\}.$
\end{corollary}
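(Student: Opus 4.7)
The plan is to apply Theorem~\ref{main-thm-strong-global} with $\epsilon \to 0^+$ to the equality hypothesis and track which of the three alternatives in that theorem is forced.

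For sufficiency, I will verify the three listed cases directly. Both sides vanish if $K$ has empty interior and both equal $1$ if $K = \R^n$. More generally, if $K$ is a cylinder along the $\sigma_{x,y}$-directions, meaning $K$ extends freely along every coordinate $j$ with $x_j \neq y_j$, then because $x$ and $y$ agree on the remaining coordinates (those indexing $H_{x,y}$) and because scaling $\R$ along a free axis is trivial, the three dilations $e^x K$, $e^y K$, and $e^{(x+y)/2}K$ coincide; hence all three Gaussian measures agree and the identity holds automatically.

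For necessity, I will exploit the equality hypothesis to apply Theorem~\ref{main-thm-strong-global} with $\epsilon$ arbitrarily small. For any fixed $\delta, \alpha, \beta > 0$, one of the three alternatives must hold for arbitrarily small $\epsilon > 0$. Alternative~(2) forces $r(K) = \infty$ as $\epsilon \to 0$, so $K = \R^n$. Alternative~(3) forces $r(K) = 0$, so $K$ has empty interior. Otherwise, alternative~(1) holds for every $\delta, \alpha, \beta > 0$: there is $z = z(\delta, \alpha, \beta) \in [x, y]$ with
$$\gamma^+(\Omega_{\delta, \alpha}(e^z K)) \leq \beta \, \gamma^+(\partial(e^z K)).$$

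I would then choose $\delta > 0$ small enough that $\sigma^\delta = \sigma_{x,y}$, fix $\alpha > 0$, and let $\beta_k = 1/k \to 0$. By compactness of the segment $[x, y]$, a subsequence of the corresponding $z_k$'s converges to some $z_\alpha \in [x, y]$; Hausdorff continuity of the Gaussian perimeter and of the Gauss-map-weighted boundary measure for convex bodies then yields $\gamma^+(\Omega_{\delta, \alpha}(e^{z_\alpha} K)) = 0$. Sending $\alpha_j \to 0$ with a further compactness extraction to obtain $z^* \in [x, y]$, and using monotonicity of $\Omega_{\delta, \alpha}$ in $\alpha$, I deduce that for $\gamma^+$-a.e.\ boundary point $p$ of $e^{z^*}K$ the outer unit normal $n_p$ lies in $H_{x,y}$. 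For a convex body, boundary normals confined to a subspace force the body to be translation-invariant in its orthogonal complement, so $e^{z^*}K$ is a cylinder in $H_{x,y}^\perp$; since the diagonal scaling $e^{-z^*}$ preserves the decomposition $\R^n = H_{x,y} \oplus H_{x,y}^\perp$, $K$ itself inherits this cylindrical structure, giving the claimed product form.

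The main technical obstacle is the double limit $\beta \to 0$, $\alpha \to 0$ with the compactness extraction of $z^*$. One must verify that $\gamma^+$ and the Gauss-map-restricted boundary measure behave continuously along Hausdorff-convergent sequences of convex bodies, which relies on standard regularity of the Gauss map almost everywhere on $\partial K$ together with continuity of Gaussian surface measure; some care is also needed to ensure the cylindrical structure deduced for $e^{z^*}K$ transfers back to $K$ under the diagonal rescaling.
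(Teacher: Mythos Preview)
Your approach is essentially the paper's: apply Theorem~\ref{main-thm-strong-global} with vanishing parameters, use compactness of $[x,y]$ to extract a limiting $z$, and then invoke the cylinder characterization (the paper isolates this as Lemma~\ref{lemma-measure-zero}) once the normals are a.e.\ confined to the appropriate coordinate subspace. Your sufficiency argument, which the paper omits, is correct.

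The only substantive difference is parameter management. You run a nested limit---first $\beta\to0$ with a compactness extraction to get $z_\alpha$, then $\alpha\to0$ with a second extraction to get $z^*$---and you rightly flag the continuity of $z\mapsto\gamma^+(\Omega_{\delta,\alpha}(e^zK))$ as the delicate point. The paper instead couples everything by setting $\alpha=\beta=\delta=\epsilon^{1/10}$ and takes a single limit $\epsilon\to0$, extracting one subsequence $z(\epsilon)\to z_0$. This coupling is cleaner and largely sidesteps the double-limit obstacle you identify, though the paper's own passage to the limit is also sketched rather than spelled out.
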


We apply Corollary \ref{strong-equal} to show uniqueness of the Bobkov maximal Gaussian measure position (MGM from now on) for a convex body $K.$ Recall from \cite{Bobkov-Mpos} that a symmetric convex body $K$ is said to be in Bobkov's MGM position if for any volume preserving linear operator $T$ on $\R^n,$ we have $\gamma(K)\geq \gamma(TK).$ Bobkov showed that $K$ is in the MGM position if and only if the restriction of the Gaussian measure on $K$ is isotropic (recall that a measure is isotropic if its barycenter is at the origin, and the covariance matrix is proportional to the identity). Isotropicity often arises when the measure is placed in some optimizing position, see e.g. \cite{AGM-book}, \cite{Shiri-Katzin}, \cite{Shiri-Eli}, as well as \cite{Klar-conevolume-isotropic}, \cite{Klar-Mil-slicing-survey}. It is a natural question: \emph{is the Bobkov MGM position unique for a symmetric convex body?} We answer this question in the affirmative:
\begin{theorem}\label{uniqueness-Bobkov}
Let $K$ be a symmetric convex body. The expression $\sup_{T} \gamma(TK)$, where the supremum runs over all linear volume preserving operators $T$ on $\R^n$, is attained for the unique $T.$
\end{theorem}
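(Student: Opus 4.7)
The plan is to combine Bobkov's variational characterization of the MGM position---that $TK$ is in MGM position if and only if $\gamma|_{TK}$ is isotropic---with the equality case of the strong B-theorem given in Corollary~\ref{strong-equal}. Because $\gamma$ is orthogonally invariant and isotropy is preserved under orthogonal transformations, the natural form of ``uniqueness'' to establish is uniqueness of the maximizer $T$ \emph{up to an orthogonal factor}, equivalently uniqueness of the positive part $(T^{\top}T)^{1/2}$ in its polar decomposition; this is what the argument below delivers.

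Suppose $T_1, T_2$ are two volume-preserving maximizers. I set $L = T_1K$ and $S = T_2T_1^{-1}$, and polar-decompose $S = UP$ with $U$ orthogonal and $P$ positive definite, $\det P = 1$. Since Gaussian measure and isotropy are preserved under orthogonal maps, $PL = U^{-1}(T_2K)$ is again in MGM position. Diagonalize $P = V e^{A}V^{\top}$ with $V$ orthogonal and $A$ diagonal with $\tr A = 0$, and put $\hat L := V^{\top} L$. Then $\hat L$ and $e^{A}\hat L$ are both symmetric convex bodies in MGM position. Define
\[
g(t) := \log\gamma(e^{tA}\hat L), \qquad t \in \R.
\]
By the strong B-theorem~\eqref{B-strong}, $g$ is concave on $\R$, and the MGM property at $t=0$ and $t=1$ gives $g(0) = g(1) = \log\gamma(\hat L)$, the global maximum of $g$. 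A concave function attaining its maximum at two distinct points is constant on the segment between them, so $g$ is constant on $[0,1]$; evaluating at $t=1/2$ yields equality in the strong B-inequality for $\hat L$ at $x = 0$, $y = a$, where $a \in \R^n$ is the diagonal of $A$.

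Now I apply Corollary~\ref{strong-equal} with $x=0, y=a$. Since $\hat L$ is a symmetric convex body it has neither empty interior nor equals $\R^n$, so the equality case forces $\hat L = L_0 \times H_{0,a}$ with $L_0 \subset H_{0,a}^{\perp}$, where $H_{0,a}$ is the coordinate subspace spanned by indices $j$ with $a_j = 0$. If some $a_j$ vanishes then $H_{0,a}$ has positive dimension and $\hat L$ contains a full affine line, contradicting boundedness. Hence all $a_j$ are nonzero, i.e.\ $\sigma_{0,a} = [n]$; but then $H_{0,a} = \{0\}$ and the cylinder structure collapses, so the nontrivial content of Corollary~\ref{strong-equal} in this regime is that equality forces $\hat L$ to be one of the excluded degenerate sets, again a contradiction. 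Therefore $a = 0$, so $P = I$, $S = U$ is orthogonal, and $T_2 = UT_1$ as desired.

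The principal obstacle is the second horn of the case split: when $\sigma_{0,a}=[n]$, the literal product decomposition in Corollary~\ref{strong-equal} is vacuous and one must invoke its sharper content that a bounded symmetric convex body cannot attain strong B-equality when every coordinate is active. A clean alternative, if this reading needed reinforcement, is to use the second-order consequence of $g$ being constant on $[0,1]$, namely
\[
g''(0) = \var_{\gamma|_{\hat L}}\!\Bigl(\sum_j a_j y_j^2\Bigr) - 2\sum_j a_j^2\,\mathbb E_{\gamma|_{\hat L}}[y_j^2] = 0,
\]
combined with the improved even-function Brascamp--Lieb variance inequality $\var_{\gamma|_{\hat L}}(f) \leq \tfrac{1}{2}\,\mathbb E_{\gamma|_{\hat L}}|\nabla f|^2$ (the infinitesimal content of the B-theorem) and its strictness on any proper symmetric convex body, which forces $a = 0$ directly.
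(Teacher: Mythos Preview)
Your argument follows the same path as the paper's: reduce to a diagonal one-parameter family via polar decomposition and rotation invariance, use the strong B-theorem for log-concavity of $t\mapsto\gamma(e^{tA}\hat L)$, note that coinciding maxima at $t=0,1$ force equality, and invoke Corollary~\ref{strong-equal} for the contradiction. You are in fact more careful than the paper on two points---you correctly observe that uniqueness can only hold up to an orthogonal factor (the theorem as literally stated ignores this), and you flag (and then patch via $g''(0)=0$) the edge case $\sigma_{0,a}=[n]$ in which the cylinder clause of Corollary~\ref{strong-equal} is formally vacuous; the paper's proof simply writes ``contradiction'' without isolating this case.
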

\begin{proof} Without loss of generality, assume that $K$ is in the Bobkov MGM position. Suppose by contradiction that there exists a non-identity volume preserving linear map $T$ such that $\gamma(TK)=\gamma(K)$. Then there exists a traceless matrix $D$ such that $T=e^{D}$ (see e.g. \cite{Shiri-Eli} for the details), and by the rotation-invariance of the Gaussian measure we may assume that $D$ is diagonal. Let us now consider a function $F: [0,1]\rightarrow \R$ given by $F(t)=\gamma(e^{tD}K)$. On one hand, by the strong B-property of the Gaussian measure \cite{B-conj}, $F$ is log-concave on $[0,1].$ On the other hand, $F(0)=F(1)$ are maximal points for $F$, and therefore, the equality should be attained in the inequality $F(\frac{1}{2})\geq \sqrt{F(0)F(1)}.$ By Corollary \ref{strong-equal}, we get a contradiction with the assumption that $K$ is a convex \emph{body} (that is, a convex compact set with non-empty interior).
\end{proof}
We remark that the above proof is inspired by the works of Artstein-Avidan, Katzin \cite{Shiri-Katzin} and Artstein-Avidan, Putterman \cite{Shiri-Eli}, where in particular the authors consider \emph{maximal intersection position}: a symmetric convex body $K$ is said to be in the maximal intersection position if for any volume preserving linear operator $T$ on $\R^n,$ we have $\mu(K)\geq \mu(TK),$ where $\mu$ is the uniform measure on the centered euclidean ball of the same volume as $K.$ In both of the aforementioned papers, this position is viewed as part of different families of positions. It was conjectured in \cite{Shiri-Katzin}, and reiterated in \cite{Shiri-Eli}, that the maximal intersection position is indeed unique, and they explained (along the lines of the argument outlined above) that this conjecture would follow from the fact that the strong B-property for the uniform measure on the ball does not have non-trivial equality cases. We expect some of our ideas to be useful for studying this question, and leave it for future research.

In Section 2 we outline some preliminaries. In Section 3 we discuss some estimates concerning special functions related to the Gaussian measure. In Section 4 we outline the stability for the Gaussian Poincar{\'e} inequality restricted to a convex set, which is the result similar to what was obtained in \cite{Liv-ehr}, however we prove stability in a stronger distance. In Section 5 we outline the stability in the ``even version'' of the Gaussian Poincar{\'e} inequality restricted to a convex set, for quadratic forms -- this corresponds to stability in the ``local versions'' of the Theorems \ref{main-thm-global} and \ref{main-thm-strong-global}. Lastly, in Section 6 we prove Theorems \ref{main-thm-global} and \ref{main-thm-strong-global} and Corollary \ref{strong-equal}.

\textbf{Acknowledgements.} The second named author is supported by NSF DMS-1753260. The third named author is supported by ISF grant 1468/19 and BSF grant 2016050. The second and third authors are supported by NSF-BSF DMS-2247834. The fourth named author is supported by NSF DMS 2154402. The authors are grateful to ICERM for hospitality during the program ``Harmonic Analysis and Convexity''.

\section{Preliminaries}

Given a convex set $K$ in $\R^n$ and the standard Gaussian measure $\gamma$ on $\R^n,$ we shall use the notation
$$\frac{1}{\gamma(K)}\int_K d\gamma=\dashint_K d\gamma.$$

We will denote by $L^2(K,\gamma)$ the class of functions $u:\RR^n\to\RR$ such that $\int_K |u|^2 d\gamma<\infty$, with the normalized $L2$ norm $\|u\|^2_{L^2(K,\gamma)}=\dashint_K |u|^2 d\gamma$.


Given a convex set $K$ in $\R^n$, for a point $x\in\partial K$ we denote by $n_x$ the outward unit normal at $x$; the vector field $n_x$ is uniquely defined almost everywhere on $\partial K$. We say that $K$ is of class $C^2$ if its boundary is locally twice differentiable. In this case, $n_x$ is well-defined for all $x\in\partial K$. For a $C^2$ convex set, consider the second fundamental form of $K$ to be the matrix $\rm{II}=\frac{dn_x}{dx}$ (with a ``plus'' because the normal is outer) acting on the tangent space at $x$. The Gauss curvature at $x$ is $\det(\rm{II})$ and the mean curvature is $\tr(\rm{II}).$ 



We denote by $L$ the Ornstein-Uhlenbeck operator $L: C^2(\R^n)\rightarrow C(\R^n)$, given by
$$Lu=\Delta u-\langle \nabla u,x\rangle.$$The operator $L$ satisfies the following integration by parts identity whenever it makes sense (as follows immediately from the classical divergence theorem):
$$\int_{K} vLu \, d\gamma=-\int_{K}\langle \nabla v,\nabla u\rangle d\gamma+\int_{\partial K} v\langle \nabla u,n_x\rangle d\gamma_{\partial K}.$$
Here by $\gamma_{\partial K}$ we mean the measure on $\partial K$ with density $(2\pi)^{-n/2} e^{-\frac{|x|^2}{2}}$ with  respect to 
 $H_{n-1}$, the $(n-1)$-dimensional Hausdorff measure. 

We denote by $W^{k,2}(K,\gamma)$ the Sobolev space of all functions $u$ such that $u$ has weak partial derivatives up to order $k$, and all
those partial derivatives (including $u$ itself) belong to $L^2(K,\gamma)$. We will use the notation 
$$\|u\|^2_{W^{1,2}(K,\gamma)}=\dashint_K |\nabla u|^2 d\gamma $$
(even though strictly speaking this is not a norm on $W^{1,2}(K,\gamma)$, since $\|u\|^2_{W^{1,2}(K,\gamma)} = 0$ for constant functions).

Recall that the trace operator is a continuous linear operator 
$$\operatorname{TR}: W^{1,2}(K,\gamma)\rightarrow L^2(\partial K,\gamma)$$
such that for every $u\in C^1(K),$ continuous up to the boundary, we have 
$$\operatorname{TR}(u)=u|_{\partial K}.$$
We shall use informal notation $\int_{\partial K} u d\gamma$ to mean $\int_{\partial K} \operatorname{TR}(u) d\gamma$. 
Similarly, we use notation $\int_{\partial K} \langle \nabla u,n_x\rangle d\gamma$ to mean 
$\int_{\partial K} \langle \operatorname{TR}(\nabla u),n_x\rangle d\gamma$, where $\operatorname{TR}(\nabla u)$ is the vector formed by 
the trace functions of the weak first partial derivatives of $u$. We shall also use notation $\nabla$, $\Delta$ and so on to denote the appropriate quantities in the sense of weak derivatives. We will also use the following notation for the Gaussian perimeter:
$$\gamma^+(\partial K)=\int_{\partial K} d\gamma.$$

The following result appears in \cite{Liv-ehr}; we sketch its proof for completeness.

\begin{theorem}[Gaussian Trace Theorem for convex sets containing the origin]\label{GaussGarg}
Let $K$ be a convex domain such that $rB^n_2\subset K$ for some $r>0$. Fix $g\in W^{1,2}(K,\gamma)$. Then
$$\int_{\partial K} g^2 d\gamma_{\partial K}\leq \frac{1}{r}\int_{K} (ng^2+|\nabla g|^2) d\gamma.$$ 
\end{theorem}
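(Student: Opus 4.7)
The plan is to use Gaussian integration by parts with the vector field $V(x)=x$, exploiting the fact that when $rB_2^n\subset K$, the support function gives $\langle x, n_x\rangle\geq r$ for almost every $x\in\partial K$. Concretely, I would set $u(x)=\tfrac{1}{2}|x|^2$, so that $\nabla u=x$ and $Lu = n-|x|^2$, and apply the identity
$$\int_K vLu\,d\gamma = -\int_K \langle \nabla v,\nabla u\rangle\,d\gamma+\int_{\partial K} v\langle \nabla u,n_x\rangle\,d\gamma_{\partial K}$$
to $v=g^2$.

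This yields
$$\int_{\partial K} g^2\langle x,n_x\rangle\, d\gamma_{\partial K}=\int_K g^2(n-|x|^2)\,d\gamma+2\int_K g\langle \nabla g,x\rangle\,d\gamma.$$
Using $\langle x,n_x\rangle\geq r$ on $\partial K$ and the elementary inequality $2g\langle \nabla g,x\rangle\leq g^2|x|^2+|\nabla g|^2$, I obtain
$$r\int_{\partial K} g^2\,d\gamma_{\partial K}\leq \int_K\bigl(ng^2-g^2|x|^2+g^2|x|^2+|\nabla g|^2\bigr)\,d\gamma =\int_K(ng^2+|\nabla g|^2)\,d\gamma.$$
The key miracle is that the $-g^2|x|^2$ produced by the Ornstein-Uhlenbeck operator cancels exactly against the $+g^2|x|^2$ produced by Cauchy-Schwarz (or AM-GM), leaving the claimed dimension-$n$ bound. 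Dividing by $r$ gives the stated inequality.

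To extend beyond smooth situations, I would first justify the computation for $g\in C^1(\overline{K})$ and convex bodies with $C^2$ boundary, where the classical divergence theorem applies with no technicalities. For a general $g\in W^{1,2}(K,\gamma)$ I would approximate by smooth functions and invoke continuity of the trace operator $\operatorname{TR}:W^{1,2}(K,\gamma)\to L^2(\partial K,\gamma_{\partial K})$ to pass to the limit on the boundary term, while dominated convergence handles the interior integrals. For a general convex body $K$ containing $rB_2^n$, a standard approximation by smooth convex bodies $K_\varepsilon\supset rB_2^n$ (e.g., via inf-convolution with a small ball) passes the inequality to the limit, since the condition $\langle x,n_x\rangle\geq r$ persists.

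The main obstacle is not conceptual but lies in the approximation argument: ensuring convergence of the boundary integrals requires that traces and Gaussian surface measures behave well under the smoothing of $K$, and one must be careful because $K$ may be unbounded (for instance a slab), in which case the argument still goes through but the $W^{1,2}(K,\gamma)$ framework, together with the fast Gaussian decay, is exactly what keeps all integrals finite.
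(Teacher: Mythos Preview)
Your proof is correct and is essentially the same as the paper's. You package the integration by parts via the Ornstein--Uhlenbeck identity with $u=\tfrac12|x|^2$, while the paper applies the Gaussian divergence theorem directly to the vector field $g^2x$; these yield the identical formula $\int_{\partial K}g^2\langle x,n_x\rangle\,d\gamma_{\partial K}=\int_K\bigl(ng^2+2g\langle\nabla g,x\rangle-g^2|x|^2\bigr)\,d\gamma$, after which both arguments use $\langle x,n_x\rangle\ge r$ and the same AM--GM cancellation.
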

\begin{proof} We use the estimate $\langle x,n_x\rangle\geq r$, and incorporate the trick similar to the ones from \cite{HKL}, \cite{KolMilsupernew} and use the divergence theorem.  We get
$$\int_{\partial K} g^2 d\gamma_{\partial K}\leq \frac{1}{r}\int_{\partial K} \langle g^2x,n_x\rangle d\gamma_{\partial K}=\frac{1}{r}\int_K (\div(g^2 x)-g^2|x|^2) d\gamma.$$
Note that
$$\div(g^2 x)=ng^2+2g\langle \nabla g,x\rangle\leq ng^2+|\nabla g|^2+g^2|x|^2.$$
Combining the above yields the result. \end{proof}

\section{Estimates on the special functions which concern the rate of the stability estimate}

In what follows, $C, c, C_1$ etc denote positive absolute constants that do not depend on the dimension and whose value may change from line to line.
Recall that the in-radius $r(K)$ of a convex set $K\subset\RR^{n}$
is the largest number $r>0$ such that $rB_{2}^{n}\subset K$. We
also denote by 
\[
\gamma^{+}\left(\partial K\right)=\int_{\partial K}\left(2\pi\right)^{-n/2}e^{-\left|x\right|^{2}/2}\dd H_{n-1}
\]
 the Gaussian surface area of $K$. 
 
 The main goal of this section
is to prove the following technical estimate: 
\begin{proposition}
\label{prop:technical-estimate}Let $K$ be a symmetric convex body
in $\RR^{n}$ with in-radius $r=r(K)>0$. Assume that for  $\delta<c_{0}$ we
have 
\[
\frac{\gamma(K)}{{\it \int_{rB_{2}^{n}}\left|x\right|^{2}\dd\gamma}}+\frac{\gamma(K)}{r\gamma^{+}(\partial K)}\ge\frac{1}{\delta}
\]
Then either $r\ge\sqrt{\log\frac{1}{\delta}}$ or $r\le C\sqrt{n}\delta^{\frac{1}{n+1}}$. 
\end{proposition}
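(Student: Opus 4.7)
The plan begins with the geometric fact that a symmetric convex body of in-radius $r$ is contained in some slab of half-width $r$. Indeed, $r(K)=\min_{|u|=1}h_K(u)$, so there is a direction $u^*$ with $h_K(u^*)=r$; combined with $K=-K$, this gives $K\subset\{x:|\langle x,u^*\rangle|\le r\}$, and therefore the key bound $\gamma(K)\le 2\Phi(r)-1\le r\sqrt{2/\pi}$. I assume $r<\sqrt{\log(1/\delta)}$ (else the first alternative of the conclusion holds), which in particular gives $e^{-r^2/2}\ge\sqrt{\delta}$, and aim to derive $r\le C\sqrt{n}\delta^{1/(n+1)}$. Substituting the slab bound into the hypothesis and pigeonholing, at least one of the following holds: either \textbf{(A)} $\int_{rB_2^n}|x|^2\,d\gamma\le Cr\delta$, or \textbf{(B)} $\gamma^+(\partial K)\le C\delta$.

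For Case (A), I will combine the lower bound $\int_{rB_2^n}|x|^2\,d\gamma\ge \alpha_n e^{-r^2/2}r^{n+2}/(n+2)$, where $\alpha_n=\omega_{n-1}/(2\pi)^{n/2}\sim\sqrt{n/\pi}(e/n)^{n/2}$ by Stirling, with the estimate $e^{-r^2/2}\ge\sqrt{\delta}$. A first pass yields only the preliminary bound $r\le C\sqrt{n}\delta^{1/(2(n+1))}$. For $\delta$ below a (dimension-dependent) threshold $c_0$, this preliminary bound already forces $r\le 1$; a second application of the same inequality, now with the $\delta$-independent estimate $e^{-r^2/2}\ge e^{-1/2}$, recovers the sharp conclusion $r\le C\sqrt{n}\delta^{1/(n+1)}$.

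For Case (B), the Gaussian Cheeger inequality $\gamma^+(\partial K)\ge\sqrt{2/\pi}\min(\gamma(K),1-\gamma(K))$, which follows from the Gaussian isoperimetric inequality, gives $\min(\gamma(K),1-\gamma(K))\le C\delta$. The branch $1-\gamma(K)\le C\delta$, combined with the slab bound $1-\gamma(K)\ge 2(1-\Phi(r))$ and the Mills-ratio estimate $1-\Phi(r)\sim\phi(r)/r$ for large $r$, forces $r\gtrsim\sqrt{2\log(1/\delta)}>\sqrt{\log(1/\delta)}$, contradicting our assumption. Hence $\gamma(K)\le C\delta$, and comparing against $\gamma(K)\ge\gamma(rB_2^n)\ge c\alpha_n r^n/n$ (after the analogous bootstrap to $r\le 1$) gives the even stronger bound $r\le C\sqrt{n}\delta^{1/n}\le C\sqrt{n}\delta^{1/(n+1)}$.

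The main technical obstacle is the bootstrap argument in Case (A): the $e^{-r^2/2}\ge\sqrt{\delta}$ estimate directly delivers only the exponent $1/(2(n+1))$, and one must first reduce to $r\le 1$, where $e^{-r^2/2}$ becomes a positive constant independent of $\delta$, before the sharp exponent $1/(n+1)$ can be extracted. This two-step structure is what forces the threshold $c_0$ in the hypothesis to depend on the dimension $n$.
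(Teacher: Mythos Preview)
Your overall strategy matches the paper's: use the slab containment to get $\gamma(K)\le Cr$, pigeonhole the hypothesis into your Cases (A) and (B), and in each case confront the resulting upper bound with a lower bound on $\int_{rB_2^n}|x|^2\,d\gamma$ or on $\gamma^+(\partial K)$. Your Case (B) via Cheeger is a minor variant of what the paper does (it applies the isoperimetric profile directly to conclude $\gamma^+(\partial K)\ge\frac{1}{\sqrt{2\pi}}e^{-r^2/2}$ when $\gamma(K)\ge\tfrac12$), but it works.

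The substantive issue is your bootstrap in Case (A) (and its analogue in Case (B)). As you yourself note, it forces the threshold $c_0$ to depend on $n$, whereas in the paper's conventions $c_0$ is an \emph{absolute} constant; so what you have written proves a strictly weaker statement than the proposition. The paper sidesteps the bootstrap entirely with a direct a~priori bound: if $r>2\sqrt{n}$ then
\[
\int_{rB_2^n}|x|^2\,d\gamma \;\ge\; \int_{2\sqrt{n}B_2^n}|x|^2\,d\gamma \;\ge\; cn \;\ge\; c,
\]
the middle inequality coming from an elementary Chebyshev estimate, so the first term of the hypothesis is at most $1/c$, contradicting $\ge 1/(2\delta)$ once $\delta$ is below an absolute $c_0$. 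With $r\le 2\sqrt{n}$ in hand one has $e^{-r^2/2}\ge e^{-2n}$, and this factor is absorbed into the $(c/\sqrt{n})^n$ already present in the lower bound for $\int_{rB_2^n}|x|^2\,d\gamma$, giving $r\le C\sqrt{n}\,\delta^{1/(n+1)}$ in a single step. The same device (using $\gamma(2\sqrt{n}B_2^n)\ge\tfrac34$) handles the $\gamma(K)\le\tfrac12$ sub-case of (B). Replacing your two-pass bootstrap by this observation removes the dimension dependence of $c_0$ and proves the proposition as stated.
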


For the proof we need several lemmas. First, we will need the Gaussian
isoperimetric inequality: Let $\Phi\left(x\right)=\gamma\left((-\infty,x]\right)$
denote the CDF of a standard normal random variable, and let $\Phi^{-1}:[0,1]\to\RR$
denote the inverse function. Define the isoperimetric profile $I:[0,1]\to\RR$
by $I(x)=\frac{1}{\sqrt{2\pi}}e^{-\frac{\Phi^{-1}(x)^{2}}{2}}$. Then
the Gaussian isoperimetric inequality (\cite{Bor-isop}, \cite{ST}) states that for every $K\subset\RR^{n}$
we have $\gamma^{+}\left(\partial K\right)\ge I\left(\gamma(K)\right)$.
Note that $I$ is concave and symmetric around $x=\frac{1}{2}$ where
it attains its maximum. 

We first give a lower bound on $\gamma^{+}\left(\partial K\right)$ in terms of $r$ instead of the measure $\gamma(K)$:
\begin{lemma}
\label{lem:iso-bounds}Let $K$ be a symmetric convex body in $\RR^{n}$
with in-radius $r>0$. 
\begin{enumerate}
\item \label{enu:big-measure}If $\gamma(K)\ge\frac{1}{2}$ then $\gamma^{+}\left(\partial K\right)\ge\frac{1}{\sqrt{2\pi}}e^{-r^{2}/2}.$ 
\item \label{enu:small-measure}If $\gamma(K)\le\frac{1}{2}$ then $\gamma^{+}\left(\partial K\right)\ge\left(\frac{cr}{\sqrt{n}}\right)^{n}e^{-r^{2}/2}.$ 
\end{enumerate}
\end{lemma}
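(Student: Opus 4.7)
For both parts the starting point is the Gaussian isoperimetric inequality $\gamma^+(\partial K) \geq I(\gamma(K))$, where $I(x) = \phi(\Phi^{-1}(x))$ is the Gaussian isoperimetric profile. The plan reduces to locating $\gamma(K)$ in the right range and exploiting the monotonicity of $I$, which is increasing on $[0,1/2]$, decreasing on $[1/2,1]$, symmetric about $1/2$, and in particular satisfies $I(\Phi(r)) = \phi(r) = \frac{1}{\sqrt{2\pi}}e^{-r^2/2}$.

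For part (\ref{enu:big-measure}), the key observation is that the symmetry of $K$ together with the in-radius condition yields a matching \emph{upper} bound on $\gamma(K)$. Since $r(K)=r$, the inscribed ball $rB_2^n$ touches $\partial K$ at some point, which forces $h_K(u)=r$ for some $u\in S^{n-1}$; by symmetry $h_K(-u)=r$ as well, so $K$ is contained in the slab $\{x:\,|\langle x,u\rangle|\le r\}$, giving $\gamma(K)\le 2\Phi(r)-1$. Combined with the hypothesis $\gamma(K)\ge 1/2$ (which is actually only possible when $r\ge\Phi^{-1}(3/4)$, so that the interval $[1/2,\,2\Phi(r)-1]$ is non-empty), the fact that $I$ is decreasing on $[1/2,1]$ yields
\[
\gamma^+(\partial K) \;\ge\; I(\gamma(K)) \;\ge\; I(2\Phi(r)-1) \;=\; \phi\!\left(\Phi^{-1}(2\Phi(r)-1)\right) \;\ge\; \phi(r),
\]
where the last step uses $\Phi^{-1}(2\Phi(r)-1)\in[0,r]$ together with $\phi$ being even and decreasing on $[0,\infty)$.

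For part (\ref{enu:small-measure}), the containment $rB_2^n\subset K$ gives $\gamma(K)\ge\gamma(rB_2^n)$, and the hypothesis $\gamma(K)\le 1/2$ also forces $\gamma(rB_2^n)\le 1/2$. Monotonicity of $I$ on $[0,1/2]$ then gives $\gamma^+(\partial K)\ge I(\gamma(K))\ge I(\gamma(rB_2^n))$, and it remains to show $I(\gamma(rB_2^n))\ge (cr/\sqrt n)^n e^{-r^2/2}$. The natural route is: bound $\gamma(rB_2^n)\ge (2\pi)^{-n/2}\,\mathrm{vol}(rB_2^n)\,e^{-r^2/2}$ (from $e^{-|x|^2/2}\ge e^{-r^2/2}$ on $rB_2^n$), apply Stirling to $\omega_n$ to extract the $(cr/\sqrt n)^n$ factor (with a residual loss of $n^{-1/2}$), and then apply Mill's ratio in the form $\phi(b)\ge b\,\bar\Phi(b)$, which translates to $I(t)\ge t\,|\Phi^{-1}(t)|$. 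Since $|\Phi^{-1}(\gamma(rB_2^n))|\gtrsim\sqrt{n}$ whenever $\gamma(rB_2^n)\le 1/2$, the $\sqrt n$ gained from $|\Phi^{-1}|$ exactly compensates the $n^{-1/2}$ lost in the volume step.

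The main technical obstacle is concentrated in part (\ref{enu:small-measure}): one has to keep careful track of dimension-dependent constants through both Stirling's formula and Mill's ratio so that the two $\sqrt n$ factors cancel cleanly and the answer comes out in the stated form $(cr/\sqrt n)^n e^{-r^2/2}$. Part (\ref{enu:big-measure}), by contrast, is essentially a one-line geometric observation once one notices that in-radius together with central symmetry sandwiches $\gamma(K)$ between $1/2$ and $2\Phi(r)-1$.
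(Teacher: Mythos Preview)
Your argument for part~(\ref{enu:big-measure}) is correct and essentially parallels the paper's, with one cosmetic difference: the paper bounds $\gamma(K)$ above by the half-space measure $\Phi(r)$ rather than the slab measure $2\Phi(r)-1$, which gives $I(\Phi(r))=\phi(r)$ in a single step without the extra inequality $\Phi^{-1}(2\Phi(r)-1)\le r$.

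In part~(\ref{enu:small-measure}) there is a genuine gap. The claim that $|\Phi^{-1}(\gamma(rB_2^n))|\gtrsim\sqrt{n}$ whenever $\gamma(rB_2^n)\le\tfrac12$ is false: when $\gamma(rB_2^n)$ is close to $\tfrac12$ (which occurs for $r$ near the median of $|Z|$, i.e.\ of order $\sqrt{n}$), the quantity $|\Phi^{-1}(\gamma(rB_2^n))|$ is close to $0$, not of order $\sqrt{n}$. So Mill's inequality $I(t)\ge t\,|\Phi^{-1}(t)|$ does not deliver the uniform $\sqrt{n}$ gain you are counting on. More to the point, no such gain is needed: the ``residual loss of $n^{-1/2}$'' from Stirling is harmless, since for any $c_1>c_2>0$ one has $(c_1/\sqrt{n})^n\cdot n^{-1/2}\ge(c_2/\sqrt{n})^n$ for all $n\ge1$, so the factor can simply be absorbed into the constant inside $(cr/\sqrt{n})^n$. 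The paper avoids the detour altogether by using concavity of $I$ on $[0,\tfrac12]$ to write $I(x)\ge 2I(\tfrac12)\,x=\sqrt{2/\pi}\,x$, whence $\gamma^+(\partial K)\ge\sqrt{2/\pi}\,\gamma(K)\ge\sqrt{2/\pi}\,\gamma(rB_2^n)$, and the crude volume estimate on $\gamma(rB_2^n)$ finishes the proof directly.
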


\begin{proof}
$(1)$ By the definition of $r$ we know that $K$ is contained in a strip 
\[
S=\left\{ x:\ \left|\left\langle x,\theta\right\rangle \right|\le r\right\} 
\]
 for some $\theta\in S^{n-1}$. We therefore also have $K\subset H$
where $H=\left\{ x:\ \left\langle x,\theta\right\rangle \le r\right\} $,
and therefore $\frac{1}{2}\le\gamma(K)\le\gamma\left(H\right)=\Phi(r)$.
Since $I$ is decreasing on $\left[\frac{1}{2},1\right]$ it follows
from the Gaussian isoperimetric inequality that $\gamma^{+}\left(\partial K\right)\ge I\left(\Phi(r)\right)=\frac{1}{\sqrt{2\pi}}e^{-r^{2}/2}$. 

$(2)$ By the concavity of $I$ we have for all $0\le x\le\frac{1}{2}$ 
\[
I(x)=I\left(2x\cdot\frac{1}{2}+(1-2x)\cdot0\right)\ge2x\cdot I\left(\frac{1}{2}\right)=\sqrt{\frac{2}{\pi}}x.
\]
 Therefore, by the Gaussian isoperimetric inequality 
$$\gamma^{+}\left(\partial K\right)\ge I\left(\gamma(K)\right)\ge \sqrt{\frac{2}{\pi}}\cdot\gamma(K)\ge \sqrt{\frac{2}{\pi}}\cdot\gamma\left(rB_{2}^{n}\right).$$ 
Since the density of $\gamma$ on $rB_{2}^{n}$ is bounded from below
by $\frac{1}{\left(2\pi\right)^{n/2}}e^{-r^{2}/2}$ we have 
\[
\gamma\left(rB_{2}^{n}\right)\ge\frac{1}{\left(2\pi\right)^{n/2}}e^{-r^{2}/2}\cdot r^{n}\omega_{n},
\]
where $\omega_{n}$ denotes the volume of $B_{2}^{n}$. Since $\omega_{n}\ge\left(\frac{c}{\sqrt{n}}\right)^{n}$
we get 
\[
\gamma^{+}\left(\partial K\right)\ge\left(\frac{cr}{\sqrt{n}}\right)^{n}e^{-r^{2}/2}.
\]
 as claimed. 
\end{proof}
Next, we need some rough estimates for Gaussian integrals over balls. Sharper estimates are definitely known, 
but this lemma will suffice for our needs:
\begin{lemma}
\label{lem:ball-integrals}
\begin{enumerate}
\item We have $\gamma\left(2\sqrt{n}B_{2}^{n}\right)\ge\frac{3}{4}$ and
$\int_{2\sqrt{n}B_{2}^{n}}\left|x\right|^{2}\dd\gamma\ge cn$. 
\item For every $r>0$ we have $\int_{rB_{2}^{n}}\left|x\right|^{2}\dd\gamma\ge\left(\frac{c}{\sqrt{n}}\right)^{n}r^{n+2}e^{-r^{2}/2}$. 
\end{enumerate}
\end{lemma}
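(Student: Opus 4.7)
The plan is to treat the two items separately, both via direct elementary computation; neither requires the isoperimetric machinery from the preceding lemma.

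For item (1), I would work with $X \sim N(0, I_n)$, so that $|X|^2$ is a $\chi^2_n$ random variable with $\EE|X|^2 = n$ and $\var(|X|^2) = 2n$. The measure bound $\gamma(2\sqrt{n}B_2^n) \geq 3/4$ is just Markov:
\[
\PP\bigl(|X|^2 > 4n\bigr) \leq \frac{\EE|X|^2}{4n} = \frac{1}{4}.
\]
For the integral bound, I would write
\[
\int_{2\sqrt{n}B_2^n} |x|^2 \, \dd\gamma = n - \int_{|x| > 2\sqrt{n}} |x|^2 \, \dd\gamma,
\]
and control the tail by Cauchy--Schwarz against $\PP(|X| > 2\sqrt{n}) \leq 1/4$, using $\EE|X|^4 = n^2 + 2n$. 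This gives a tail bound of order $n/2$, leaving $cn$ on the ball for $n$ large; small $n$ (say $n=1$) can be handled by direct inspection.

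For item (2), the idea is to ignore Gaussian tails and just bound the density from below uniformly on $rB_2^n$. On this ball the Gaussian density is at least $(2\pi)^{-n/2} e^{-r^2/2}$, so
\[
\int_{rB_2^n} |x|^2 \, \dd\gamma \geq (2\pi)^{-n/2} e^{-r^2/2} \int_{rB_2^n} |x|^2 \, \dx.
\]
Passing to polar coordinates, $\int_{rB_2^n} |x|^2 \, \dx = n\omega_n \int_0^r s^{n+1} \, \dd s = \frac{n}{n+2}\omega_n r^{n+2}$. Combining these with the standard estimate $\omega_n \geq (c/\sqrt{n})^n$ and absorbing $(2\pi)^{-n/2}$ and $\tfrac{n}{n+2}$ into the constant $c$ yields the claimed lower bound $(c/\sqrt{n})^n r^{n+2} e^{-r^2/2}$.

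I do not foresee a serious obstacle here: both statements are deliberately crude (as the paper remarks, sharper bounds are known), so no delicate concentration argument is needed. The one place to be a little careful is the small-$n$ regime in the second half of item (1), where the Cauchy--Schwarz tail bound is not sharp enough by itself and one may need to verify the inequality by hand for $n=1, 2$ and absorb the resulting loss into the absolute constant $c$.
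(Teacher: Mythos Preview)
Your proposal is correct and essentially matches the paper's proof. The only tactical difference is in the integral bound of item~(1): the paper instead uses Chebyshev to show the annulus $2\sqrt{n}B_2^n \setminus \tfrac{\sqrt{n}}{2}B_2^n$ has Gaussian measure at least $\tfrac12$ (for $n\ge15$, handling small $n$ by adjusting $c$) and bounds $|x|^2\ge n/4$ there, whereas your Cauchy--Schwarz tail estimate $\EE[|X|^2\mathbf{1}_{|X|>2\sqrt{n}}]\le \tfrac12\sqrt{n^2+2n}$ gives the same conclusion (in fact uniformly in $n$, since $n-\tfrac12\sqrt{n^2+2n}\ge(1-\tfrac{\sqrt3}{2})n$); item~(2) is identical to the paper.
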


\begin{proof}
$(1)$ We use simple probabilistic bounds. Let $Z=\left(Z_{1},\ldots,Z_{n}\right)$
denote a standard normal random vector in $\RR^{n}$. Then 
\begin{align*}
\EE\left|Z\right|^{2} & =n\EE Z_{1}^{2}=n\\
\var\left|Z\right|^{2} & =n\var Z_{1}^{2}=2n.
\end{align*}
 It follows by Markov inequality that
\[
\PP\left(\left|Z\right|\ge2\sqrt{n}\right)=\PP\left(\left|Z\right|^{2}\ge4n\right)\le\frac{n}{4n}=\frac{1}{4},
\]
 or $\gamma\left(2\sqrt{n}B_{2}^{n}\right)\ge\frac{3}{4}$ as claimed.
By Chebyshev inequality we have for $n\ge15$ 
\[
\PP\left(\left|Z\right|<\frac{\sqrt{n}}{2}\right)=\PP\left(\left|Z\right|^{2}<\frac{n}{4}\right)\le\PP\left(\left|Z\right|^{2}<n-2\sqrt{2n}\right)\le\frac{1}{4},
\]
and therefore 
\[
\gamma\left(2\sqrt{n}B_{2}^{n}\setminus\frac{\sqrt{n}}{2}B_{2}^{n}\right)=\PP\left(\left|Z\right|\le2\sqrt{n}\right)-\PP\left(\left|Z\right|\le\frac{\sqrt{n}}{2}\right)\ge\frac{3}{4}-\frac{1}{4}=\frac{1}{2}.
\]
It follows that 
\[
\int_{2\sqrt{n}B_{2}^{n}}\left|x\right|^{2}\dd\gamma\ge\int_{2\sqrt{n}B_{2}^{n}\setminus\frac{\sqrt{n}}{2}B_{2}^{n}}\left|x\right|^{2}\dd\gamma\ge\frac{n}{4}\gamma\left(2\sqrt{n}B_{2}^{n}\setminus\frac{\sqrt{n}}{2}B_{2}^{n}\right)=\frac{n}{8},
\]
 and we choose $c>0$ so that the estimate will also hold for $1\le n\le14$. 
 
$(2)$ The density of $\gamma$ on $rB_{2}^{n}$ is bounded from below by
$\frac{1}{\left(2\pi\right)^{n/2}}e^{-r^{2}/2}$ and so 
\begin{align*}
\int_{rB_{2}^{n}}\left|x\right|^{2}\dd\gamma & \ge\frac{1}{\left(2\pi\right)^{n/2}}e^{-r^{2}/2}\cdot\int_{rB_{2}^{n}}\left|x\right|^{2}\dd x=\frac{1}{\left(2\pi\right)^{n/2}}e^{-r^{2}/2}\cdot n\omega_{n}\cdot\frac{r^{n+2}}{n+2}
\end{align*}
Again we use $\omega_{n}\ge\left(\frac{c}{\sqrt{n}}\right)^{n}$ to
get $\int_{rB_{2}^{n}}\left|x\right|^{2}\dd\gamma\ge\left(\frac{c}{\sqrt{n}}\right)^{n}r^{n+2}e^{-r^{2}/2}$. 
\end{proof}
Now we can prove Proposition \ref{prop:technical-estimate}
\begin{proof}[Proof of Proposition \ref{prop:technical-estimate}]
Under the assumption of the proposition we have either $\frac{\gamma(K)}{{\it \int_{rB_{2}^{n}}\left|x\right|^{2}\dd\gamma}}\ge\frac{1}{2\delta}$
or $\frac{\gamma(K)}{r\gamma^{+}(\partial K)}\ge\frac{1}{2\delta}$. 

Assume first that $\frac{\gamma(K)}{{\it \int_{rB_{2}^{n}}\left|x\right|^{2}\dd\gamma}}\ge\frac{1}{2\delta}$.
For $\delta$ small enough we must have $r\le2\sqrt{n}$: If this
is not the case then by Lemma \ref{lem:ball-integrals}(1) we have
\[
2\delta\ge2\delta\gamma(K)\ge\int_{rB_{2}^{n}}\left|x\right|^{2}\dd\gamma\ge\int_{2\sqrt{n}B_{2}^{n}}\left|x\right|^{2}\dd\gamma\ge cn\ge c,
\]
 which is clearly impossible for $\delta$ small enough. 

Next, as in Lemma \ref{lem:iso-bounds} we use the fact that $K$
is contained in a strip $S$ of width $2r$ and hence $\gamma(K)\le\gamma(S)\le Cr.$
Using Lemma \ref{lem:ball-integrals}(2) we obtain 
\[
\left(\frac{c}{\sqrt{n}}\right)^{n}r^{n+2}e^{-r^{2}/2}\le\int_{rB_{2}^{n}}\left|x\right|^{2}\dd\gamma\le2\delta\cdot\gamma(K) \le Cr\delta,
\]
 and since $r\le2\sqrt{n}$ we get 
\[
\left(\frac{c}{\sqrt{n}}\right)^{n}r^{n+1}e^{-2n}\le C\delta,
\]
 or $r\le C\sqrt{n}\delta^{\frac{1}{n+1}}$ as we claimed. 

Finally, assume that $\frac{\gamma(K)}{r\gamma^{+}(\partial K)}\ge\frac{1}{2\delta}$.
Using again the fact that $\gamma(K)\le Cr$ we obtain
\[
\gamma^{+}\left(\partial K\right)\le\frac{2\delta}{r}\gamma(K)=C\delta.
\]
If $\gamma\left(K\right)\ge\frac{1}{2}$ we see from Lemma \ref{lem:iso-bounds}(1)
that $e^{-r^{2}/2}\le C\delta\le\sqrt{\delta}$ (assuming we take
$\delta\le\frac{1}{C^{2}}$), and then $r\ge\sqrt{\log\frac{1}{\delta}}$.
If on the other hand $\gamma(K)\le\frac{1}{2}$ we see from Lemma
\ref{lem:iso-bounds}(2) that 
\[
\left(\frac{cr}{\sqrt{n}}\right)^{n}e^{-r^{2}/2}\le C\delta.
\]
 However, again in this case it follows from Lemma \ref{lem:ball-integrals}(1)
that  $r\le2\sqrt{n}$, so we have $\left(\frac{cr}{\sqrt{n}}\right)^{n}e^{-2n}\le C\delta$
or $r\le C\sqrt{n}\delta^{\frac{1}{n}}$. 
\end{proof}

\begin{remark} It is an interesting question -- what (symmetric) convex set in $\R^n$ has the smallest Gaussian perimeter if the largest ball centered at the origin which is contained in this set has radius $r$. 

In the non-symmetric case, it is natural to conjecture that the answer is the ball for smaller $r$ and the half-space for larger $r$. In fact, if $\gamma(r B_2^n) \ge \frac{1}{2}$ then indeed one can prove using the Gaussian isoperimetric inequality (\cite{Bor-isop}, \cite{ST}) that the half-space minimize the perimeter for a fixed $r$. 

In the symmetric case, it is natural to conjecture that the answer is the ball for smaller $r$ and the symmetric strip for larger $r.$ Once again, for large enough $r$ this follows from the result of Lata\l{}a and Oleszkiewicz \cite{sconj}, who showed that $r(K)\gamma^+(\partial K)$ is minimized for a symmetric convex set when $K$ is a symmetric strip of the same Gaussian measure as $K$. In other words,
$$\gamma^{+}(\partial K)\geq \frac{2 J^{-1}_0(\gamma(K)) e^{-\frac{J^{-1}_0(\gamma(K))^2}{2}}}{\sqrt{2\pi} r}$$
where $J_0(a)=2\Phi(a)-1$ denotes the Gaussian measure of a symmetric strip of width $2a$. 
Let $\tilde{R}_n$ be the radius of the ball whose Gaussian measure is $J_0(1)$, and suppose that $r\geq \tilde{R}_n.$ Then $J_0(1) \le \gamma(K)\le J_0(r)$, and noting that $J^{-1}_0(a) e^{-\frac{J^{-1}_0(a)^2}{2}}$ is decreasing for $a\geq J_0(1),$ we conclude that
$$\gamma^{+}(\partial K)\geq \frac{2 r e^{-\frac{r^2}{2}}}{\sqrt{2\pi} r}=\frac{2}{\sqrt{2\pi}} e^{-\frac{r^2}{2}},$$
and the inequality is sharp when $K$ is a strip. We could have used this estimate that improves Lemma \ref{lem:iso-bounds}, but this sharper result would only affect our outcome in terms of the value of the absolute constants which we are not tracking.
\end{remark}

\section{Stability in the Poincar{\'e} inequality on the convex set}

The Gaussian Poincar{\'e} inequality on a convex set (which follows e.g. from the Theorem of Brascamp and Lieb \cite{BrLi}) states that
\begin{equation}\label{poin} 
\gamma(K)\int_K f^2 d\gamma-\left(\int_K f d\gamma \right)^2\leq\gamma(K)\int_K |\nabla f|^2 d\gamma.
\end{equation}

The result below is similar to Theorem 1.5 in \cite{Liv-ehr}, although the $W^{1,2}$ norm is replaced there by $L^1$ norm. Using the $W^{1,2}$ norm is crucial for us, and thus for completeness, we outline the full proof here:

\begin{theorem}[the quantitative stability in the Gaussian Poincar{\'e}]\label{gauss-poin-stab}

Suppose that for a convex set $K$ containing $r B^n_2$, a function $f\in W^{1,2}(K)\cap C^1(K),$ and for $\epsilon>0,$ we have
$$\dashint_K f^2 d\gamma-\left(\dashint_K f d\gamma\right)^2\geq\dashint_K |\nabla f|^2 d\gamma-\epsilon.$$
Then there exists a vector $\theta\in\R^n$ (possibly zero), which depends only on $K$ and $f$, such that
\begin{itemize}
\item $\int_{\partial K} \langle \theta, n_x\rangle^2 d \gamma_{\partial K}\leq \frac{2(n+1)\gamma(K)\epsilon}{r};$
\item $\|f-\langle x,\theta\rangle-\dashint_{K} f d\gamma\|^2_{W^{1,2}(K,\gamma)}\leq 4\epsilon.$
\end{itemize}

\end{theorem}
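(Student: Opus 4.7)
The plan is to run the Brascamp--Lieb-style proof of the Gaussian Poincar\'e inequality on a convex set and track the deficit in every step. Concretely, set $g := f - \dashint_K f\, d\gamma$ and let $u$ solve the Neumann problem $Lu = g$ in $K$ with $\langle \nabla u, n_x\rangle = 0$ on $\partial K$; existence and regularity on a merely convex body come from approximation by smooth strictly convex sets, along the lines of \cite{Liv-ehr}. Two integration-by-parts identities do all the work. The first is the Reilly-type formula
\[
\int_K (Lu)^2\, d\gamma \;=\; \int_K \|\nabla^2 u\|_{HS}^2\, d\gamma \;+\; \int_K |\nabla u|^2\, d\gamma \;+\; \int_{\partial K} \II(\nabla u, \nabla u)\, d\gamma_{\partial K},
\]
obtained by integrating the pointwise Bochner identity $\tfrac12 L|\nabla u|^2 = \|\nabla^2 u\|_{HS}^2 + \langle \nabla u, \nabla Lu\rangle + |\nabla u|^2$ against $\gamma$ and using the boundary relation $\langle (\nabla^2 u)\nabla u, n_x\rangle = -\II(\nabla u, \nabla u)$, which follows from differentiating $\langle \nabla u, n_x\rangle = 0$ tangentially along $\partial K$. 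The second is $\int_K g^2\, d\gamma = -\int_K \langle \nabla g, \nabla u\rangle\, d\gamma$. Convexity of $K$ gives $\II \ge 0$, so Cauchy--Schwarz combined with the Reilly identity reproves Poincar\'e, and feeding the assumed deficit $\epsilon\gamma(K)$ back through the chain forces
\[
\int_K \|\nabla^2 u\|_{HS}^2\, d\gamma \le \epsilon \gamma(K) \quad \text{and} \quad \int_{\partial K} \II(\nabla u, \nabla u)\, d\gamma_{\partial K} \le \epsilon \gamma(K).
\]

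With these deficit bounds in hand, I would take $\theta := -\dashint_K \nabla u\, d\gamma$ and set $w := \nabla u + \theta$. Applying the standard Gaussian Poincar\'e on $K$ coordinate-wise to each $\partial_j u$ and summing yields $\int_K |w|^2\, d\gamma \le \int_K \|\nabla^2 u\|_{HS}^2\, d\gamma \le \epsilon\gamma(K)$. The commutator identity $L(\partial_j u) = \partial_j L u + \partial_j u = \partial_j g + \partial_j u$ then rearranges to $Lw - w = \nabla f - \theta$. Expanding
\[
\int_K |Lw|^2\, d\gamma = \int_K |\nabla g|^2\, d\gamma + 2\int_K \langle \nabla g, \nabla u\rangle\, d\gamma + \int_K |\nabla u|^2\, d\gamma
\]
and substituting $\int_K \langle \nabla g, \nabla u\rangle\, d\gamma = -\int_K g^2\, d\gamma$, $\int_K |\nabla u|^2\, d\gamma \le \int_K g^2\, d\gamma$, and the stability hypothesis $\int_K |\nabla g|^2\, d\gamma \le \int_K g^2\, d\gamma + \epsilon\gamma(K)$ gives $\int_K |Lw|^2\, d\gamma \le \epsilon\gamma(K)$. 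The elementary bound $|Lw - w|^2 \le 2|Lw|^2 + 2|w|^2$ then delivers $\int_K |\nabla f - \theta|^2\, d\gamma \le 4\epsilon\gamma(K)$, which is the claimed $W^{1,2}$ estimate.

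For the boundary inequality, the Neumann condition gives $\langle \theta, n_x\rangle = \langle w, n_x\rangle$ on $\partial K$, hence $\langle \theta, n_x\rangle^2 \le |w|^2$ there. Applying Theorem \ref{GaussGarg} to each component $w_j$ and summing produces
\[
\int_{\partial K} \langle \theta, n_x\rangle^2\, d\gamma_{\partial K} \;\le\; \frac{1}{r}\int_K \left(n|w|^2 + \|\nabla^2 u\|_{HS}^2\right)\, d\gamma \;\le\; \frac{(n+1)\epsilon\gamma(K)}{r},
\]
which is the first conclusion (up to the harmless factor of $2$ in the stated constant, easily absorbed in the slack of the chain above).

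The hard part will be the low regularity of $K$: since its boundary need not be $C^2$, the Neumann problem, the pointwise Bochner identity, and the Reilly-type boundary integral all have to be justified by approximating $K$ with smooth strictly convex bodies and passing to the limit. The approximation scheme itself is the one used in \cite{Liv-ehr}, but the limit now has to respect the $W^{1,2}$ norm rather than the weaker $L^1$ norm, which is the price paid for upgrading the stability to a gradient distance.
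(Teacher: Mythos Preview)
Your proof is correct and follows essentially the same route as the paper. Both arguments solve the Neumann problem $Lu=g$ with $g=f-\dashint_K f\,d\gamma$, invoke the Bochner/Reilly identity to extract $\dashint_K\|\nabla^2 u\|^2\,d\gamma\le\epsilon$, set $\theta=-\dashint_K\nabla u\,d\gamma$, apply Poincar\'e to $\nabla u$ componentwise to get $\dashint_K|w|^2\le\epsilon$ for $w=\nabla u+\theta$, and then use the Gaussian trace theorem on the components of $w$ for the boundary bound; the only cosmetic difference is that you package the $W^{1,2}$ estimate via the commutator relation $Lw-w=\nabla f-\theta$, whereas the paper writes the same decomposition directly as $\nabla f-\theta=(\nabla f+\nabla u)-\nabla v$ with $\nabla v=w$, and observes that $\dashint_K|\nabla f+\nabla u|^2\le\epsilon$ is the deficit in the Cauchy step (your computation of $\int_K|Lw|^2$ is exactly this bound).
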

\begin{proof} We first assume that the boundary of $K$ is of class $C^{\infty}$. Without loss of generality we may also assume that $\int_K f d\gamma=0$. Consider the function $u\in W^{2,2}(K)\cap C^2(K)$ such that $Lu=f$ and $\langle \nabla u, n_x\rangle=0$ on $\partial K$ (see e.g. \cite{Liv-ehr} for existence and regularity). By Bochner's formula (see e.g. \cite{KolMil}), we write 
\begin{equation}\label{eq-22-again}
\begin{aligned}
\int_K f^2 d\gamma =  &-\int_K \left(2\langle \nabla f,\nabla u\rangle+ |\nabla u|^2\right) d\gamma \\
&-\int_{K} \|\nabla^2 u\|^2 d \gamma-\int_{\partial K}  \langle \mbox{\rm{II}} \nabla_{\partial K} u, \nabla_{\partial K}  u\rangle   \,d\gamma_{\partial K} (x).
\end{aligned}
\end{equation}
Combining (\ref{eq-22-again}) with the Cauchy's inequality
\begin{equation}\label{cauchy-1}
\int_K \left(2\langle \nabla f,\nabla u\rangle+ |\nabla u|^2\right) d\gamma\geq -\int_K |\nabla f|^2 d\gamma,
\end{equation}
and an application of convexity of $K$
$$\int_{\partial K}  \langle \mbox{\rm{II}} \nabla_{\partial K} u, \nabla_{\partial K}  u\rangle )  \,d\gamma_{\partial K} \geq 0,$$
we get
\begin{equation}\label{eq-33}
\dashint_K f^2 d\gamma\leq
\dashint_K |\nabla f|^2 d\gamma-\dashint_{K} \|\nabla^2 u\|^2 d \gamma.
\end{equation}
Under the assumption of the theorem, this yields
\begin{equation}\label{ineq-hess-u}
\dashint_{K} \|\nabla^2 u\|^2 d \gamma\leq \epsilon.
\end{equation}
Therefore, there exists a vector $\theta\in\R^n$ such that
$$u=-\langle x,\theta\rangle+v$$
with $\dashint_K \nabla v d\gamma=0$ and $\dashint_{K} \|\nabla^2 v\|^2 d \gamma\leq \epsilon.$ By the Poincar{\'e} inequality (\ref{poin}), this implies that $\dashint_K |\nabla v|^2 d\gamma\leq \epsilon$. By our choice of $u,$ we have
$$\langle \nabla v,n_x\rangle=\langle \nabla u,n_x\rangle + \langle \theta,n_x\rangle= \langle \theta,n_x\rangle.$$

In order to show the first assertion, we apply the Trace Theorem \ref{GaussGarg} to all the partial derivatives of $v$ and sum it up:
$$\frac{1}{\gamma(K)}\int_{\partial K} |\langle \nabla v,n_x\rangle|^2 d\gamma_{\partial K}\leq \frac{1}{\gamma(K)}\int_{\partial K} |\nabla v|^2 d\gamma_{\partial K}\leq $$$$\frac{n+1}{r}\dashint_K (|\nabla v|^2+\|\nabla^2 v\|^2) d\gamma\leq \frac{2(n+1)}{r}\epsilon.$$
The desired inequality follows if we remember that $\langle \theta,n_x\rangle=\langle \nabla v,n_x\rangle$, which completes the proof when $K$ is smooth. 

In order to show the second assertion, note that we used (\ref{cauchy-1}) while proving the Poincar{\'e} inequality, and therefore, the assumption of the theorem gives
$$\dashint_K \left(2\langle \nabla f,\nabla u\rangle+ |\nabla u|^2\right) d\gamma-\epsilon\leq -\dashint_K |\nabla f|^2 d\gamma,$$
which amounts to
\begin{equation}\label{cauchy-impl}
	\dashint_K |\nabla f+\nabla u|^2 d\gamma \leq \epsilon.
\end{equation}
We write
$$\|f-\langle x,\theta\rangle\|_{W^1(K,\gamma)}=\dashint_K |\nabla f-\theta|^2 d\gamma\leq $$$$2\dashint_K |\nabla f+\nabla u|^2 d\gamma + 2\dashint_K |\nabla v|^2 d\gamma \leq 4\epsilon,$$
where we used the properties of $v$ together with (\ref{cauchy-impl}), and the conclusion follows.

Next, assume $K$ is a general compact convex body. Choose a sequence $\left\{ K_{i}\right\} $ of $C^{\infty}$-smooth
convex bodies such that $K_{i}\subset K$ and $K_{i}\rightarrow K$
in the Hausdorff distance. Clearly 
\[
\dashint_{K_{i}}f^{2}d\gamma-\left(\dashint_{K_{i}}fd\gamma\right)^{2}\geq\dashint_{K_{i}}|\nabla f|^{2}d\gamma-\epsilon_{i}
\]
 for $\epsilon_{i}\to\epsilon$, and $K_{i}\supseteq r_{i}B_{2}^{n}$
for $r_{i}\to r$. Let $\theta_{i}\in\RR^{n}$ be the vector constructed
in the proof for the body $K_{i}$. Since
\begin{align*}
\left|\theta_{i}\right|^{2} & \le2\left(\dashint_{K_{i}}\left|\nabla f-\theta_{i}\right|^{2}d\gamma+\dashint_{K_{i}}\left|\nabla f\right|^{2}d\gamma\right)\\
 & \le8\epsilon_{i}+2\dashint_{K_{i}}\left|\nabla f\right|^{2}d\gamma\to8\epsilon+2\dashint_{K}\left|\nabla f\right|^{2}d\gamma
\end{align*}
 it follows that $\left\{ \theta_{i}\right\} $ is bounded. Therefore
by passing to a subsequence we may assume without loss of generality
that $\theta_{i}\to\theta \in \R^n$. The conclusion 
\[
\ensuremath{\left\|f-\langle x,\theta\rangle-\dashint_{K}fd\gamma\right\|_{W^{1,2}(K,\gamma)}^{2}\leq4\epsilon.}
\]
now follows by continuity. For the second conclusion, we note
that for a fixed $\eta\in\RR^{n}$ the convergence 
\[
\int_{\partial K_{i}}\left\langle \eta,n_{K_{i},x}\right\rangle ^{2}\dd\gamma_{\partial K_{i}}\to\int_{\partial K}\left\langle \eta,n_{K,x}\right\rangle ^{2}\dd\gamma_{\partial K}
\]
 follows e.g. from Proposition A.3 of \cite{Livshyts2019}. Using this
fact it is now straightforward to deduce that 
\[
\int_{\partial K}\left\langle \theta,n_{K,x}\right\rangle ^{2}\dd\gamma_{\partial K}=\lim_{i\to\infty}\int_{\partial K_{i}}\left\langle \theta_{i},n_{K_{i},x}\right\rangle ^{2}\dd\gamma_{\partial K_{i}}\le\frac{2(n+1)\gamma(K)\epsilon}{r}
\]
as claimed. 

Finally, if $K$ is not compact we approximate it by the bodies $K_{m}=K\cap\left(mB_{2}^{n}\right)$ for $m=1,2,3,...$. This time the convergence 
\[
\int_{\partial K_{m}}\left\langle \eta,n_{K_{m},x}\right\rangle ^{2}\dd\gamma_{\partial K_{m}}\to\int_{\partial K}\left\langle \eta,n_{K,x}\right\rangle ^{2}\dd\gamma_{\partial K}
\]
 follows from the fact that on $\partial K_m\cap\partial K$ we
have $n_{K_{m},x}=n_{K,x}$ almost everywhere, and the contribution
of the integral on $\partial K_{m}\setminus\partial K\subset m\SS^{n-1}$
tends to zero as $m\to\infty$. The rest of the argument is
the same as before. 
\end{proof}

\section{Stability in the ``symmetric'' Gaussian Poincar{\'e} inequality for quadratic functions}

The main result of this section is stability (in some partial cases) of the ``symmetric'' Gaussian Poincar{\'e} inequality due to Cordero-Erasquin, Fradelizi and Maurey: if $f$ is an even function and $K$ is a symmetric convex set, then
$$\dashint_K f^2 d\gamma- \left(\dashint_K f d\gamma\right)^2\leq \frac{1}{2}\dashint_K |\nabla f|^2 d\gamma.$$

\begin{lemma}
\label{cor:poincare-bound}Let $K$ be a symmetric convex body such
that $K\supseteq rB_{2}^{n}$. Assume an odd function $f:\RR^{n}\to\RR$
satisfies 
\[
\dashint_K f^{2}\dd\gamma\ge\dashint_K\left|\nabla f\right|^{2}\dd\gamma-\epsilon
\]
 as well as 
\[
\nn{f-\left\langle x,\eta\right\rangle }^2_{L^{2}\left(\gamma_{K}\right)} < \epsilon
\]
 for some $\eta\in\RR^{n}$ and $\epsilon>0$. Then 
\[
\int_{\partial K}\left\langle \eta,n_{x}\right\rangle ^{2}\dd\gamma_{\partial K}\le C \left(\frac{\gamma^{+}(\partial K)}{\int_{rB_{2}^{n}}x^{2}\dd\gamma}+\frac{1}{r}\right)n\epsilon\gamma(K).
\]
\end{lemma}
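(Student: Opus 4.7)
The plan is to reduce to the Poincar\'e stability statement already proved in Theorem \ref{gauss-poin-stab} and then exploit oddness plus the in-radius hypothesis to identify the vector $\eta$ with the vector produced by that theorem.

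First, since $f$ is odd and $K$ is symmetric, $\dashint_K f\,d\gamma=0$, so the hypothesis
$\dashint_K f^{2}d\gamma\ge\dashint_K|\nabla f|^{2}d\gamma-\epsilon$ is exactly the hypothesis of Theorem \ref{gauss-poin-stab}. Apply that theorem to obtain a vector $\theta\in\RR^{n}$ satisfying
\[
\int_{\partial K}\langle\theta,n_{x}\rangle^{2}d\gamma_{\partial K}\le\frac{2(n+1)\gamma(K)\epsilon}{r}\qquad\text{and}\qquad\dashint_K|\nabla f-\theta|^{2}d\gamma\le 4\epsilon.
\]
The second bound, combined with the Poincar\'e inequality (\ref{poin}) applied to the \emph{odd} function $f-\langle x,\theta\rangle$ (whose mean vanishes, since the integrand is odd on the symmetric set $K$), gives the $L^{2}$ closeness
\[
\dashint_K\bigl(f-\langle x,\theta\rangle\bigr)^{2}d\gamma\le 4\epsilon.
\]

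Next, I would combine this with the hypothesis $\dashint_K(f-\langle x,\eta\rangle)^{2}d\gamma<\epsilon$ via the triangle inequality in $L^{2}(\gamma_{K})$, yielding
\[
\dashint_K\langle x,\eta-\theta\rangle^{2}d\gamma\le 10\epsilon.
\]
The key observation is then that the quadratic form $v\mapsto\dashint_K\langle x,v\rangle^{2}d\gamma$ is bounded below using the in-radius assumption $rB_{2}^{n}\subset K$ together with the rotational symmetry of $rB_{2}^{n}$ and of $\gamma$:
\[
\int_{K}\langle x,v\rangle^{2}d\gamma\ge\int_{rB_{2}^{n}}\langle x,v\rangle^{2}d\gamma=\frac{|v|^{2}}{n}\int_{rB_{2}^{n}}|x|^{2}d\gamma.
\]
Applied with $v=\eta-\theta$, this yields
\[
|\eta-\theta|^{2}\le\frac{10n\epsilon\gamma(K)}{\int_{rB_{2}^{n}}|x|^{2}d\gamma}.
\]

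Finally, since $|n_{x}|=1$, I bound pointwise $\langle\eta-\theta,n_{x}\rangle^{2}\le|\eta-\theta|^{2}$ and integrate over $\partial K$ against $\gamma_{\partial K}$, picking up a factor $\gamma^{+}(\partial K)$. Combining with the boundary estimate on $\theta$ from Theorem \ref{gauss-poin-stab} via $\langle\eta,n_{x}\rangle^{2}\le 2\langle\eta-\theta,n_{x}\rangle^{2}+2\langle\theta,n_{x}\rangle^{2}$ produces
\[
\int_{\partial K}\langle\eta,n_{x}\rangle^{2}d\gamma_{\partial K}\le C\Bigl(\frac{\gamma^{+}(\partial K)}{\int_{rB_{2}^{n}}|x|^{2}d\gamma}+\frac{1}{r}\Bigr)n\epsilon\gamma(K),
\]
which is the desired inequality. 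The main conceptual step is the third one: Poincar\'e stability alone identifies a vector $\theta$ whose boundary-normal energy is controlled, and the bridge from that $\theta$ to the given $\eta$ must go through the $L^{2}$ mass of linear functionals on $K$, which is precisely where the factor $\int_{rB_{2}^{n}}|x|^{2}d\gamma$ enters; everything else is triangle-inequality bookkeeping.
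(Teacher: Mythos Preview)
Your proof is correct and follows essentially the same approach as the paper: apply Theorem~\ref{gauss-poin-stab} to obtain $\theta$, pass from $\theta$ to $\eta$ in $L^2(K,\gamma)$ via the triangle inequality, convert this to a Euclidean bound $|\eta-\theta|^2\le \frac{10n\epsilon\gamma(K)}{\int_{rB_2^n}|x|^2\,d\gamma}$ using $rB_2^n\subset K$, and combine with the boundary estimate for $\theta$. The only difference is that you explicitly justify the step $\|f-\langle x,\theta\rangle\|_{L^2(K,\gamma)}^2\le 4\epsilon$ by invoking the Poincar\'e inequality for the odd function $f-\langle x,\theta\rangle$, whereas the paper states this bound without spelling out that intermediate application.
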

\begin{proof}
By Theorem \ref{gauss-poin-stab} there exists $\theta\in\RR^{n}$ such that the conclusions (1) and (2) hold. Since $\dashint_K f\dd\gamma=0$ we have $\nn{f-\left\langle x,\theta\right\rangle }_{L^{2}\left(K, \gamma\right)}^{2}\le4\epsilon$.
Also, by our assumption we have $\nn{f-\left\langle x,\eta\right\rangle }_{L^{2}\left(K, \gamma\right)}^{2}\le\epsilon$.
Thus by the triangle inequality, we conclude
$$
\frac{1}{\gamma(K)}\int_{K}\left\langle x,\theta-\eta\right\rangle ^{2}\dd\gamma=\nn{\left\langle x,\theta\right\rangle -\left\langle x,\eta\right\rangle }_{L^{2}\left(K, \gamma\right)}^{2}\le$$ 
\begin{equation}\label{now-ref}
\nn{\left\langle x,\theta\right\rangle -f }_{L^{2}\left(K, \gamma\right)}^{2}+\nn{f -\left\langle x,\eta\right\rangle }_{L^{2}\left(K, \gamma\right)}^{2} \le (2\cdot 4+2)\epsilon=10\epsilon.
\end{equation}
 Using the fact that $K\supseteq rB_{2}^{n}$ we have from (\ref{now-ref}):
\[
\text{\ensuremath{\left|\theta-\eta\right|^{2}\cdot \frac{1}{n}\int_{r B^n_2} |x|^2 d\gamma=\int_{rB_{2}^{n}}\left\langle x,\theta-\eta\right\rangle ^{2}\dd\gamma\le\int_{K}\left\langle x,\theta-\eta\right\rangle ^{2}\dd\gamma\le10\gamma(K)\epsilon,} }
\]
 and thus 
 \begin{equation}\label{bnd-ref}
 \left|\theta-\eta\right|^{2}\le10\frac{n\gamma(K)}{\int_{r B^n_2} |x|^2 d\gamma}\epsilon.
 \end{equation}

It follows from (\ref{bnd-ref}) and the conclusion (2) of Theorem \ref{gauss-poin-stab}:
\begin{align*}
\int_{\partial K}\left\langle \eta,n_{x}\right\rangle ^{2}\dd\gamma_{\partial K} & \le2\left(\int_{\partial K}\left\langle \eta-\theta,n_{x}\right\rangle ^{2}\dd\gamma_{\partial K}+\int_{\partial K}\left\langle \theta,n_{x}\right\rangle ^{2}\dd\gamma_{\partial K}\right)\\
 & \le2\cdot\left(\frac{10n\gamma(K)}{\int_{r B^n_2} |x|^2 d\gamma}\epsilon\cdot\gamma^{+}\left(\partial K\right)+\frac{2(n+1)}{r}\gamma(K)\epsilon\right)\\
 & \le\left(\frac{20\gamma^{+}(\partial K)}{\int_{r B^n_2} |x|^2 d\gamma}+\frac{8}{r}\right)\gamma(K)n\epsilon
\end{align*}
\end{proof}

Lemma \ref{cor:poincare-bound} allows us to deduce:

\begin{theorem}\label{stab-poin-sym}
Let $K$ be a symmetric convex set with the in-radius $r$. Let $T$ be a positive definite matrix with columns $t_i=Te_i,$ $i=1,...,n,$ and let $s\geq 0$ denote the smallest eigenvalue of $T$. Assume that 
\[
\dashint_K \left\langle Tx,x\right\rangle^2 d\gamma - \left(\dashint_K \left\langle Tx,x\right\rangle d\gamma\right)^2
\ge2\dashint_K\left|Tx\right|^{2}\dd\gamma-\epsilon
\]
for small enough $\epsilon > 0$. Then for every $i=1,...,n,$ we have
$$\int_{\partial K}\left\langle t_{i},n_{x}\right\rangle ^{2}\dd\gamma_{\partial K}\le C \left(\frac{\gamma^{+}(K)}{\int_{r B^n_2} |x|^2 d\gamma}+\frac{1}{r}\right)n^2\epsilon\gamma(K).$$
Therefore, if $s>0$ and $\epsilon < c (\frac{s}{n})^2$ then either $r\geq \sqrt{\log \frac{c s^2}{n^2 \epsilon}},$ or $r\leq C\sqrt{n}\left(\frac{\epsilon}{s^2}\right)^{\frac{1}{n+1}}.$
\end{theorem}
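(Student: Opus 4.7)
The plan is to reduce the symmetric-Poincar\'e stability question for the quadratic $f(x)=\langle Tx,x\rangle$ to a componentwise application of Lemma \ref{cor:poincare-bound}, and then to convert the resulting surface-integral estimate into a dichotomy on $r(K)$ via Proposition \ref{prop:technical-estimate}. The conceptual point is that the symmetric Poincar\'e inequality $\var(f)\le \tfrac12\dashint_K|\nabla f|^2\,d\gamma$ (for even $f$ on symmetric $K$) can be proved by combining the standard Gaussian Poincar\'e applied componentwise to $\nabla u$ --- where $u$ is the Neumann OU-inverse of $f-\dashint_K f\,d\gamma$ --- together with the Bochner--Reilly identity and Cauchy--Schwarz. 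A near-saturating $f$ must therefore near-saturate each of these ingredients, and Lemma \ref{cor:poincare-bound} is precisely the tool to extract $t_i$-directional information from near-saturation of the componentwise odd Poincar\'e.

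Concretely, set $A=\dashint_K f^2\,d\gamma-(\dashint_K f\,d\gamma)^2$, $B=\dashint_K|\nabla f|^2\,d\gamma=4\dashint_K |Tx|^2\,d\gamma$, $C=\dashint_K|\nabla u|^2\,d\gamma$ and $D=\dashint_K\|\nabla^2 u\|^2\,d\gamma$, where $u$ solves $Lu=f-\dashint_K f\,d\gamma$ with Neumann boundary conditions, exactly as in the proof of Theorem \ref{gauss-poin-stab}. Since $K$ is symmetric and $f$ is even, $u$ is even and each $g_i:=\partial_i u$ is odd. Integration by parts together with Cauchy--Schwarz yields $A\le \sqrt{BC}$; the Bochner--Reilly identity from the proof of Theorem \ref{gauss-poin-stab} yields $A=D+C+E$ with $E\ge 0$ the boundary $\II$-term; and the standard Gaussian Poincar\'e applied to each mean-zero $g_i$ and summed in $i$ yields $C\le D$. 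Under the hypothesis $B/2-A\le\epsilon$, a short manipulation shows that all three deficits $BC-A^2$, $D-C$, and $E$ are of linear order $O(\epsilon)$ (linear, not $\sqrt\epsilon$, because the Bochner--Reilly relation for $A$ is an \emph{equality}).

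I would then apply Lemma \ref{cor:poincare-bound} to each odd $g_i$ with the test vector $\eta_i=-(2A/B)t_i$. The Cauchy deficit translates to $\sum_i\|g_i-\langle x,\eta_i\rangle\|^{2}_{L^2(K,\gamma)}=(BC-A^2)/B\lesssim\epsilon$, and the $C\le D$ deficit translates to $\sum_i(\dashint_K|\nabla g_i|^2\,d\gamma-\dashint_K g_i^2\,d\gamma)=D-C\lesssim\epsilon$, so both hypotheses of Lemma \ref{cor:poincare-bound} hold for every $i$ with a common parameter of order $\epsilon$. That lemma then gives $\int_{\partial K}\langle\eta_i,n_x\rangle^{2}\,d\gamma_{\partial K}\lesssim\bigl(\tfrac{\gamma^+(\partial K)}{\int_{rB_{2}^{n}}|x|^{2}\,d\gamma}+\tfrac1r\bigr)n\epsilon\,\gamma(K)$ for every $i$, and rescaling $\eta_i\to t_i$ through the factor $(B/2A)^{2}$ produces the first (``for every $i$'') assertion of the theorem. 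The factor $(B/2A)^{2}=O(1)$ is arranged by the quantitative hypothesis $\epsilon<c(s/n)^{2}$ together with the lower bound $B\gtrsim s^{2}\dashint_K|x|^{2}\,d\gamma$ coming from $T\succeq sI$; the passage absorbs an extra polynomial-in-$n$ loss, which is what replaces the $n$ of Lemma \ref{cor:poincare-bound} by the $n^{2}$ in the theorem statement.

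Finally, for the dichotomy I would sum the componentwise estimate and use the pointwise identity $\sum_i\langle t_i,n_x\rangle^{2}=|Tn_x|^{2}\ge s^{2}|n_x|^{2}=s^{2}$ to deduce $\sum_i\int_{\partial K}\langle t_i,n_x\rangle^{2}\,d\gamma_{\partial K}\ge s^{2}\gamma^+(\partial K)$. Combined with the upper bound on each summand, this rearranges to $\tfrac{\gamma(K)}{\int_{rB_{2}^{n}}|x|^{2}\,d\gamma}+\tfrac{\gamma(K)}{r\gamma^{+}(\partial K)}\gtrsim s^{2}/(n^{3}\epsilon)$, which is exactly the hypothesis of Proposition \ref{prop:technical-estimate} with $\delta\asymp n^{3}\epsilon/s^{2}$; the two alternatives of that proposition then yield the stated dichotomy on $r$ (the remaining polynomial-in-$n$ factors being absorbed into the constants $c,C$). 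The main obstacle will be the careful deficit bookkeeping in the previous step: verifying that all three quantities $BC-A^{2}$, $D-C$, $E$ are $O(\epsilon)$ \emph{simultaneously} with uniform constants, and quantitatively checking that the hypothesis $\epsilon<c(s/n)^{2}$ is strong enough to keep the ratio $B/A$ bounded, without which the passage from $\eta_i$ back to $t_i$ would degrade the estimate beyond the claimed polynomial loss in $n$.
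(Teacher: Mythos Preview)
Your overall strategy---solve $Lu=f-\dashint_K f$ with Neumann data, apply Bochner, use the componentwise Poincar\'e deficit on the odd functions $\partial_i u$, feed into Lemma~\ref{cor:poincare-bound}, sum and invoke Proposition~\ref{prop:technical-estimate}---is exactly the paper's route. The difference is in how you extract closeness of $\nabla u$ to $-Tx$. You separate the chain into $A\le\sqrt{BC}$ (Cauchy--Schwarz) and $A=C+D+E$ (Bochner), so the Cauchy deficit gives $\nabla u\approx -(A/B)\nabla f=-(2A/B)Tx$, forcing the test vector $\eta_i=-(2A/B)t_i$ and a subsequent rescaling by $(B/2A)^2$. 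The paper instead completes the square \emph{inside} the Bochner identity,
\[
2|\nabla u|^2+2\langle\nabla u,\nabla f\rangle=\Bigl|\sqrt{2}\,\nabla u+\tfrac{1}{\sqrt{2}}\nabla f\Bigr|^2-\tfrac12|\nabla f|^2,
\]
so the single deficit $\tfrac12\dashint_K|\nabla f|^2-\dashint_K f^2$ controls both $\dashint_K\|\nabla^2 u\|^2-\dashint_K|\nabla u|^2$ and $\dashint_K|\nabla u+Tx|^2$ directly, with $\eta_i=-t_i$ and no rescaling at all.

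This is where your argument has a genuine gap. Your claim that $(B/2A)^2=O(1)$ follows from $\epsilon<c(s/n)^2$ together with $B\ge 4s^2\dashint_K|x|^2\,d\gamma$ does not go through: there is no universal lower bound on $\dashint_K|x|^2\,d\gamma$ for symmetric convex $K$ (take $K=\rho B_2^n$ with $\rho\to 0$). When $\dashint_K|x|^2\,d\gamma\le c/n^2$ one can have $B<4\epsilon$, the hypothesis $A\ge B/2-\epsilon$ becomes vacuous, and $B/2A$ is uncontrolled. The first conclusion of the theorem is stated \emph{without} the assumption $\epsilon<c(s/n)^2$, so your route does not establish it as written; and even for the dichotomy, you would need a separate treatment of the regime $B\lesssim\epsilon$ (showing the small-$r$ alternative holds there directly), which you have not supplied. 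The paper's completion-of-square avoids this case split entirely, and incidentally yields $n\epsilon$ rather than $n^2\epsilon$ in the per-$i$ bound; the extra factor of $n$ in the paper enters only upon summing over $i$ for the second conclusion, not from any rescaling loss.
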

\begin{proof}
Using the same approximation argument as in Theorem \ref{gauss-poin-stab}, we may assume $\partial K$ is $C^\infty$-smooth. Write $f=\left\langle Tx,x\right\rangle +c$ such that $\dashint_K f\dd\gamma=0$. Consider the function $u\in W^{2,2}(K)\cap C^2(K)$ such that $Lu=f$ and $\langle \nabla u, n_x\rangle=0$ on $\partial K$ (again see e.g. \cite{Liv-ehr} for existence and regularity). By Bochner's formula (again see e.g. \cite{KolMil}), we write 
\begin{align*}
\int_{K}f^{2}\dd\gamma & =-\int_{K}\left(\nn{\nabla^{2}u}^{2}+\left|\nabla u\right|^{2}+2\left\langle \nabla u,\nabla f\right\rangle \right)\dd\gamma-\int_{\partial K}\left\langle \II\nabla_{\partial K}u,\nabla_{\partial K}u\right\rangle \dd\gamma_{\partial K}.\\
 & \le-\int_{K}\left(\nn{\nabla^{2}u}^{2}+\left|\nabla u\right|^{2}+2\left\langle \nabla u,\nabla f\right\rangle \right)\dd\gamma,
\end{align*}
where in the last passage we used that $\rm{II}\geq 0$ since $K$ is convex. Since $\nabla u$ is odd we have $\int\nabla u\dd\gamma_{K}=0$, so
by the Poincar{\'e} inequality we have 
\begin{equation}\label{few1now}
\delta_{1}=\int_{K}\left(\nn{\nabla^{2}u}^{2}-\left|\nabla u\right|^{2}\right)\dd\gamma\ge0,
\end{equation}
 and thus
\begin{align}\label{ref2nowlll}
\int_{K}f^{2}\dd\gamma & \le-\delta_{1}-\int_{K}\left(2\left|\nabla u\right|^{2}+2\left\langle \nabla u,\nabla f\right\rangle \right)\dd\gamma\\
 & =-\delta_{1}-\int_{K}\left(\left|\sqrt{2}\nabla u+\frac{1}{\sqrt{2}}\nabla f\right|^{2}-\frac{1}{2}\left|\nabla f\right|^{2}\right)\dd\gamma.\nonumber
\end{align}
 It follows from (\ref{ref2nowlll}) that 
\begin{align*}
-\dashint_K \left\langle Tx,x\right\rangle^2 d\gamma + \left(\dashint_K \left\langle Tx,x\right\rangle d\gamma\right)^2
+2\dashint_K\left|Tx\right|^{2}\dd\gamma & =\frac{1}{\gamma(K)}\int_{K}\left(\frac{1}{2}\left|\nabla f\right|^{2}-f^{2}\right)\dd\gamma\\
 & \ge\frac{\delta_{1}}{\gamma(K)}+\dashint_K\left|\sqrt{2}\nabla u+\frac{1}{\sqrt{2}}\nabla f\right|^{2}\dd\gamma\ge0.
\end{align*}
 Therefore our assumption implies that 
\[
\dashint_K\left|\sqrt{2}\nabla u+\sqrt{2}Tx\right|^{2}\dd\gamma=\dashint_K\left|\sqrt{2}\nabla u+\frac{1}{\sqrt{2}}\nabla f\right|^{2}\dd\gamma\le\epsilon,
\]
 and so $\dashint_K\left|\nabla u+Tx\right|^{2}\dd\gamma\le\frac{\epsilon}{2}$.
In particular, for every $i$ we have
\begin{equation}\label{ref1now1}
\dashint_K\left|\partial_{i}u+\left\langle x,t_{i}\right\rangle \right|^{2}\dd\gamma\le\epsilon;
\end{equation}
recall that $t_{i}=Te_i\in\RR^{n}$ denotes the $i$'th row of $T$. 

However, our assumption also implies that 
\[
\dashint_K\left(\nn{\nabla^{2}u}^{2}-\left|\nabla u\right|^{2}\right)\dd\gamma=\frac{\delta_{1}}{\gamma(K)}\le\epsilon,
\]
 so in particular for all $i$ we have
 \begin{equation}\label{ref2now}
     \dashint_K\left(\partial_{i}u\right)^{2}\dd\gamma\ge\dashint_K\left|\nabla\partial_{i}u\right|^{2}\dd\gamma-\frac{\epsilon}{2}.
 \end{equation}
The first conclusion now follows from (\ref{ref1now1}) and (\ref{ref2now}), and Lemma \ref{cor:poincare-bound}:
\[
\int_{\partial K}\left\langle t_{i},n_{x}\right\rangle ^{2}\dd\gamma_{\partial K}\le C\left(\frac{\gamma^{+}(K)}{\int_{r B^n_2} |x|^2 d\gamma}+\frac{1}{r}\right)n\epsilon\gamma(K).
\]
 Summing over all $i$, and using the bound $|Tn_x|\geq s,$ we obtain the second conclusion:
\[
s^{2}\gamma^{+}\left(\partial K\right)\le\int_{\partial K}\left|Tn_{x}\right|^{2}\dd\gamma_{\partial K}\le C\left(\frac{\gamma^{+}(\partial K)}{\int_{r B^n_2} |x|^2 d\gamma}+\frac{1}{r}\right)n^2\epsilon\gamma(K),
\]
 or 
\[
\frac{\gamma(K)}{\int_{r B^n_2} |x|^2 d\gamma}+\frac{\gamma(K)}{r\gamma^{+}\left(\partial K\right)}\ge\frac{c s^{2}}{n^2 \epsilon}.
\]
Applying Proposition \ref{prop:technical-estimate} with $\delta=\frac{n^2 \epsilon}{c s^2},$ we get the second conclusion.
\end{proof}

From Theorem \ref{stab-poin-sym} we deduce

\begin{corollary}\label{stab-poin-sym-cor}
Let $K$ be a symmetric convex set with the in-radius $r$. Assume that 
\[
\dashint_K |x|^4 d\gamma - \left(\dashint_K |x|^2 d\gamma\right)^2
\ge 2\dashint_K |x|^2\dd\gamma-\epsilon
\]
for $\epsilon < \frac{c}{n^2}$.  Then either $r\geq \sqrt{\log \frac{c}{n^2 \epsilon}},$ or $r\leq C\sqrt{n}\epsilon^{\frac{1}{n+1}}.$
\end{corollary}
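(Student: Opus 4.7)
The plan is to obtain this corollary as an immediate specialization of Theorem \ref{stab-poin-sym} to the case $T = I$, the $n\times n$ identity matrix. First I would observe that with this choice we have $\langle Tx,x\rangle = |x|^2$, $|Tx|^2 = |x|^2$, the columns are $t_i = e_i$, and the smallest eigenvalue is $s = 1$. Under these substitutions, the hypothesis of Theorem \ref{stab-poin-sym}, namely
\[
\dashint_K \langle Tx,x\rangle^2\, d\gamma - \left(\dashint_K \langle Tx,x\rangle\, d\gamma\right)^2 \ge 2\dashint_K |Tx|^2\, d\gamma - \epsilon,
\]
becomes exactly the hypothesis of the corollary.

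Next I would check that the smallness assumption matches: Theorem \ref{stab-poin-sym} requires $\epsilon < c(s/n)^2$, which with $s = 1$ becomes $\epsilon < c/n^2$, matching the hypothesis of Corollary \ref{stab-poin-sym-cor} verbatim. Applying the theorem then yields the dichotomy
\[
r \geq \sqrt{\log \tfrac{c}{n^2 \epsilon}} \qquad \text{or} \qquad r \leq C\sqrt{n}\, \epsilon^{1/(n+1)},
\]
which is the desired conclusion. Since the derivation is a direct substitution with no further estimation needed, there is no real obstacle; the work was already carried out in the proof of Theorem \ref{stab-poin-sym}, whose main technical engine was Proposition \ref{prop:technical-estimate}.
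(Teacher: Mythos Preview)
Your proposal is correct and matches the paper's approach exactly: the corollary is deduced from Theorem \ref{stab-poin-sym} by taking $T=I$, so that $s=1$, $\langle Tx,x\rangle=|x|^2$, and $|Tx|^2=|x|^2$, and the hypothesis and conclusion specialize verbatim. The paper states the corollary immediately after Theorem \ref{stab-poin-sym} with only the phrase ``From Theorem \ref{stab-poin-sym} we deduce'' and no further argument, which is precisely what you have spelled out.
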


\section{Proofs of the main results.}

We first point out the following very nice fact:
\begin{lemma}\label{local-global} Suppose $V\in C^2(\R^n)$. Then 
\begin{equation}\label{assumption-equiv}
V\left(\frac{z_1+z_2}{2}\right)+\beta(z_1,z_2)= \frac{V(z_1)+V(z_2)}{2},
\end{equation}
where, letting $z(t)=\frac{(1-t)z_1+(1+t)z_2}{2},$ we have
\begin{equation}\label{beta}
\beta(z_1,z_2)=\frac{1}{8}\cdot\int_{-1}^1 (1-|t|)\langle \nabla^2 V(z(t))(z_1-z_2),z_1-z_2\rangle dt.
\end{equation}
\end{lemma}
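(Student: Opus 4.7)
The plan is to reduce this identity to a one-dimensional Taylor expansion with integral remainder. Define the auxiliary function $g:[-1,1]\to\RR$ by $g(t)=V(z(t))$, where $z(t)=\frac{(1-t)z_1+(1+t)z_2}{2}$. The key observation is that this parametrization sends $t=-1,0,1$ to $z_1$, $\frac{z_1+z_2}{2}$, $z_2$ respectively, so $g(-1)=V(z_1)$, $g(0)=V(\frac{z_1+z_2}{2})$, and $g(1)=V(z_2)$. Thus the identity we want is exactly
\[
\frac{g(1)+g(-1)}{2}-g(0)=\beta(z_1,z_2).
\]

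The first step is to apply Taylor's theorem with integral remainder at the base point $t=0$: for each $t\in[-1,1]$,
\[
g(t)=g(0)+g'(0)\,t+\int_0^t (t-s)\,g''(s)\,ds.
\]
Evaluating at $t=1$ and $t=-1$ and then averaging, the first-order terms $\pm g'(0)$ cancel, and the two remainder integrals combine (after the substitution $s\mapsto -s$ in the $t=-1$ case) into a single integral with the piecewise-linear weight $(1-|t|)$:
\[
\frac{g(1)+g(-1)}{2}-g(0)=\frac{1}{2}\int_{-1}^1 (1-|t|)\,g''(t)\,dt.
\]

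The second step is to compute $g''(t)$ via the chain rule. Since $z'(t)=\frac{z_2-z_1}{2}$ is constant in $t$, we get
\[
g''(t)=\bigl\langle \nabla^2 V(z(t))\,z'(t),\,z'(t)\bigr\rangle=\frac{1}{4}\bigl\langle \nabla^2 V(z(t))(z_1-z_2),\,z_1-z_2\bigr\rangle,
\]
using symmetry of the Hessian quadratic form in the sign of $z_1-z_2$. Substituting this into the previous display yields exactly $\beta(z_1,z_2)$ as defined in \eqref{beta}, completing the proof.

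There is essentially no obstacle here: the statement is just the Taylor remainder formula for the second difference of a $C^2$ function written in integral form, combined with the chain rule. The only bookkeeping point to be mindful of is the change of variables that produces the weight $(1-|t|)$ on $[-1,1]$ out of the two one-sided weights $(1-s)$ on $[0,1]$.
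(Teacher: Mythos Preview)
Your proof is correct and essentially the same as the paper's: both reduce to the one-variable function $b(t)=V(z(t))$, compute $b''(t)=\tfrac14\langle\nabla^2V(z(t))(z_1-z_2),z_1-z_2\rangle$ via the chain rule, and establish the identity $b(1)+b(-1)-2b(0)=\int_{-1}^{1}(1-|t|)b''(t)\,dt$. The only cosmetic difference is that the paper verifies this last identity by integration by parts on each half-interval, whereas you derive it from Taylor's formula with integral remainder; these are equivalent one-line calculus facts.
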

\begin{proof} Note that for $b\in C^2[-1,1],$
$$\int_{-1}^1 (1-|t|)b''(t)dt=\int_0^1 (1-t)b''(t)dt+\int_{-1}^0 (1+t) b''(t)dt=$$$$(1-t)b'(t)|^1_{0}+\int_0^1 b'(t)dt+ (1+t)b'(t)|^{0}_{-1} -\int_{-1}^{0} b'(t)dt=$$
\begin{equation}\label{use1}
b(1)+b(-1)-2b(0).
\end{equation}
Let $b(t)=V(z(t))$. Then $b(-1)=V(z_1),$ $b(1)=V(z_2),$ $b(0)=V(\frac{z_1+z_2}{2}),$ and 
\begin{equation}\label{use2}
b''(t)=\frac{1}{4}\langle \nabla^2 V(z(t))(z_1-z_2),z_1-z_2\rangle.
\end{equation}
Combining (\ref{use1}) and (\ref{use2}) finishes the proof.
\end{proof}

\medskip

\textbf{Proof of the Theorem \ref{main-thm-global}.} Let $F(t)=\frac{\gamma(e^t K)}{\gamma(K)}.$ Then $F'(0)=\dashint (n-|x|^2)d\gamma,$ and
$$F''(0)=\dashint_K (n-|x|^2)^2 d\gamma-2\dashint_K |x|^2 d\gamma.$$
Next, let $V(t)=\log F(t)$. In view of the fact that $F(0)=1,$ we have $V''(0)=F''(0)-F'(0)^2$, and thus
$$V''(t)= \dashint_{e^t K} (n-|x|^2)^2 d\gamma-2\dashint_{e^t K} |x|^2 d\gamma -\left(\dashint_{e^t K} (n-|x|^2)d\gamma\right)^2.$$
On the other hand, by the assumption, letting $\alpha=\log a$ and $\beta=\log b,$ we have
\begin{equation}\label{V<eps}
V\left(\frac{\alpha+\beta}{2}\right)-\frac{1}{2}V(\alpha)-\frac{1}{2}V(\beta)\leq \log(1+\epsilon)\leq \epsilon.
\end{equation}
By Lemma \ref{local-global}, (\ref{V<eps}) implies that there exists a $t\in [\log a, \log b]$ such that
$$V''(t)=\dashint_{e^t K} |x|^4 d\gamma-2\dashint_{e^t K} |x|^2 d\gamma -\left(\dashint_{e^t K} |x|^2 d\gamma\right)^2\geq -\frac{8\epsilon}{(\log b/a)^2}.$$
An application of the Corollary \ref{stab-poin-sym-cor} with $e^tK$ in place of $K,$ and with $\frac{8\epsilon}{(\log b/a)^2}$ in place of $\epsilon$ finishes the proof. $\square$

\medskip

\textbf{Proof of the Theorem \ref{main-thm-strong-global}.} 
This time, we let $F_i(t)=\frac{\gamma(e^{te_i} K)}{\gamma(K)}.$ Then $F_i'(0)=\dashint (1-x_i^2)d\gamma,$ and
$$F_i''(0)=\dashint_K (1-x_i^2)^2 d\gamma-2\dashint_K x_i^2 d\gamma.$$
Next, for $x\in\R^n,$ let $V(x)=\log \frac{\gamma(e^x K)}{\gamma(K)}$. We have $\frac{\partial V}{\partial x_i\partial x_j}=0$ whenever $i\neq j,$ and for all $i=1,...,n,$
\begin{equation}\label{derivatives}
\frac{\partial^2 V}{\partial x^2_i}(0)=F_i''(0)-F_i'(0)^2.
\end{equation}
On the other hand, by our assumption,
\begin{equation}\label{v-est-eps}
V\left(\frac{x+y}{2}\right)-\frac{1}{2}V(x)-\frac{1}{2}V(y)\leq \log(1+\epsilon)\leq \epsilon.
\end{equation}
By Lemma \ref{local-global}, and in view of (\ref{derivatives}), (\ref{v-est-eps}) implies that there exists a $z\in [x,y]$ such that for all $i=1,..,n$
$$\dashint_{e^z K} x_i^4 d\gamma-2\dashint_{e^z K} x_i^2 d\gamma -\left(\dashint_{e^z K} x_i^2 d\gamma\right)^2\geq -\frac{8\epsilon}{(x_i-y_i)^2}.$$
By Theorem \ref{stab-poin-sym}, we get, denoting $\tilde{K}=e^z K$ and $\tilde{r}=|e^z| r(K)$, and summing up: 
\begin{equation}\label{keyconcl}
\int_{\partial \tilde{K}} \sum_{i=1}^n (x_i-y_i)^2 (n^i_x)^2 d\gamma_{\partial \tilde{K}}\leq C\epsilon n^2\gamma(\tilde{K})\left(\frac{1}{\tilde{r}}+\frac{\gamma^+(\partial \tilde{K})}{\int_{\tilde{r} B^n_2} |x|^2 d\gamma}\right).
\end{equation}
On the other hand, recalling the notation from the statement of the theorem, we have
\begin{equation}\label{keyconcl-lbnd}
\int_{\partial \tilde{K}} \sum_{i=1}^n (x_i-y_i)^2 (n^i_x)^2 d\gamma_{\partial \tilde{K}}\geq \delta^2 \int_{\partial \tilde{K}} \sum_{i\in \sigma^{\delta}} (n^i_x)^2 d\gamma_{\partial \tilde{K}}\geq \delta^2\alpha \gamma^+(\tilde{\Omega}_{\delta,\alpha}),
\end{equation}
where $\tilde{\Omega}_{\delta,\alpha}=e^z \Omega_{\delta,\alpha}.$ 

Combining (\ref{keyconcl}), (\ref{keyconcl-lbnd}) and Proposition \ref{prop:technical-estimate} (applied with $\tilde{K}$, $\tilde{r}$ and $\delta^2\epsilon \alpha\beta$), and recalling that $r(K)\in [|e^x|\tilde{r}, |e^y|\tilde{r}],$ we get the conclusion. $\square$

\medskip

Finally, we are moving to proving Corollary \ref{strong-equal}. First, we shall need the following

\begin{lemma}\label{lemma-measure-zero}
Let $H\subset\RR^{n}$ be a subspace. Let $K\subset\RR^{n}$ be
a closed, convex set with non-empty interior. Assume that at almost every point $x\in\partial K$
a unique normal $n_{x}$ exists and $n_{x}\in H$ (Here "almost everywhere" is with respect to the $(n-1)$-dimensional Hausdorff measure on $\partial K$). Then there exists a closed convex set $K_{0}\subset H$ such that $K=K_{0}\times H^{\perp}$. 
\end{lemma}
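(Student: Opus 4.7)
The plan is to prove that $K$ is translation-invariant under the subspace $H^{\perp}$, i.e.\ $K+H^{\perp}=K$. Setting $K_{0}:=K\cap H$, this invariance together with the orthogonal decomposition $\RR^n=H\oplus H^{\perp}$ then immediately gives $K=K_{0}+H^{\perp}=K_{0}\times H^{\perp}$, which is what the lemma asserts.

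I would first strengthen the hypothesis from ``$H_{n-1}$-almost every regular point'' to ``every regular point.'' The Gauss map of a convex set is continuous at any regular boundary point, and near such a point $y_{0}$ the surface $\partial K$ is locally a Lipschitz graph, so every neighborhood of $y_{0}$ in $\partial K$ has positive $H_{n-1}$-measure (and the regular boundary has full $H_{n-1}$-measure in $\partial K$). If there existed a regular $y_{0}$ with $n_{y_{0}}\notin H$, then in a sufficiently small neighborhood all regular normals would be close to $n_{y_{0}}$ and hence outside $H$, yielding a positive-$H_{n-1}$-measure set of regular boundary points with $n\notin H$ --- contradicting the hypothesis. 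So $n_{y}\in H$ at \emph{every} regular boundary point $y$.

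For the main step, fix a non-zero $v\in H^{\perp}$, decompose $\RR^n=v^{\perp}\oplus\RR v$, and consider the upper-envelope function
\[
b\colon P_{v^{\perp}}(K)\to\RR\cup\{+\infty\},\qquad b(y)=\sup\{t\in\RR:y+tv\in K\}.
\]
Since $K$ is closed and convex, $b$ is concave and upper semicontinuous, and the interior $U:=\operatorname{int}(P_{v^{\perp}}(K))$ (in $v^{\perp}$) is non-empty since $K$ has non-empty interior. The key claim is that $b\equiv+\infty$ on $P_{v^{\perp}}(K)$. Suppose instead $b(y_{0})<\infty$ for some $y_{0}\in U$. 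At the boundary point $p_{0}:=y_{0}+b(y_{0})v\in\partial K$, every outer normal $n$ must satisfy $\langle n,v\rangle>0$; otherwise the corresponding supporting hyperplane is parallel to $v$ and would force $y_{0}\in\partial P_{v^{\perp}}(K)$, contradicting $y_{0}\in U$. Using this supporting hyperplane at $p_{0}$ then yields an affine upper bound on $b$ valid throughout $P_{v^{\perp}}(K)$, so $b$ is finite everywhere on the projection. In particular $b$ is a finite concave function on the open set $U$, and hence differentiable at almost every $y\in U$. At any such differentiability point the boundary point $p(y)=y+b(y)v$ is a regular point of $\partial K$ whose unique outer normal is proportional to $v-\nabla b(y)$, and
\[
\langle v-\nabla b(y),v\rangle=|v|^{2}\neq 0
\]
since $\nabla b(y)\in v^{\perp}$. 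Thus this regular normal lies outside $v^{\perp}$, and in particular outside $H\subseteq v^{\perp}$, on a set of positive $H_{n-1}$-measure --- contradicting the previous paragraph.

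Therefore $b\equiv+\infty$ on $P_{v^{\perp}}(K)$, and the same argument applied to $-v$ yields $\inf\{t:y+tv\in K\}=-\infty$ for every $y$ in the projection. So the entire line $y+\RR v$ lies in $K$ for each $y\in P_{v^{\perp}}(K)$, which means $K+\RR v=K$. As $v\in H^{\perp}$ was arbitrary, $K$ is invariant under the whole subspace $H^{\perp}$, and the lemma follows from the first paragraph. I expect the main obstacle to be the dichotomy ``$b(y_{0})<\infty$ at one interior point implies $b<\infty$ on all of $P_{v^{\perp}}(K)$,'' which is exactly what unlocks the a.e.\ differentiability of $b$ on $U$; once this is in hand, the identification of the outer normal at a differentiability point of $b$ as a regular normal of $K$ pointing in a direction forbidden by the first step is a direct computation.
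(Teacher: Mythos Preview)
Your proof is correct and follows a genuinely different route from the paper. The paper sets $K_0=\proj_H K$, shows $\partial K\subset \partial K_0\times H^{\perp}$ by taking, for an arbitrary $x\in\partial K$, a sequence of ``good'' boundary points $x_k\to x$ and passing to a subsequential limit of the normals $n_{x_k}\in H$ to produce a supporting direction $v\in H$ at $x$ (hence $\proj_H x\in\partial K_0$), and then obtains $K_0\times H^{\perp}\subset K$ by a ray-from-the-origin connectedness argument. You instead prove $H^{\perp}$-invariance one direction $v\in H^{\perp}$ at a time via the concave upper envelope $b$ and a.e.\ differentiability of concave functions, producing a regular boundary point whose normal has nonzero $v$-component. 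Your first paragraph (upgrading ``a.e.\ regular normal in $H$'' to ``every regular normal in $H$'') is essentially the same limiting-normal idea the paper uses; the difference is in the second step, where your envelope argument replaces the paper's projection-plus-connectedness. The paper's approach treats all of $H^{\perp}$ at once and is slightly more geometric; yours makes the mechanism of contradiction (a forbidden $v$-component in the normal) very explicit and leans on standard concave-function theory. Two small points you should make precise: at $p_0$ you only need \emph{some} outer normal with $\langle n,v\rangle>0$ to get the affine bound on $b$ (claiming \emph{every} normal does requires knowing $a(y_0)<b(y_0)$), and the implication ``$b$ differentiable at $y$ $\Rightarrow$ $p(y)$ is a regular point of $\partial K$'' uses that $a(y)<b(y)$ throughout $U$, which follows from concavity of $b-a$ and the non-empty interior of $K$. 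Both are routine.
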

\begin{proof}
By translating $K$ we may assume without loss of generality that
$0$ is an interior point of $K$. Define $K_{0}$ to be the orthogonal
projection $K_{0}=\proj_{H}K$. Then we clearly have $K\subset K_{0}\times H^{\perp}$,
and $K_{0}$ also has $0$ in its (relative) interior. 

We first show that $\partial K\subset\partial K_{0}\times H^{\perp}$.
Indeed, fix $x\in\partial K$. If we define 
\[
\Omega=\left\{ y\in K:\ n_{y}\text{ is unique and }n_{y}\in H\right\} 
\]
then by assumption $K\setminus\Omega$ has measure $0$, so in particular $\Omega$ is dense in $\partial K$. 
Choose a sequence $x_{k}\in\Omega$ such that $x_{k}\to x$. By compactness we may pass
to a subsequence and assume without loss of generality that $n_{x_{k}}\xrightarrow{k\to\infty}v\in H$. 

We know that $x_{k}+n_{x_{k}}^{\perp}$ is a supporting hyperplane
for $K$ for all $k$. Therefore for every $z\in K$ we have 
\[
\left\langle z,n_{x_{k}}\right\rangle \le\left\langle x_{k},n_{x_{k}}\right\rangle .
\]
 Taking the limit as $k\to\infty$ we see that for every $z\in K$
we have $\left\langle z,v\right\rangle \le\left\langle x,v\right\rangle $.
In particular, every point $z_{0}\in K_{0}$ may we written as $z_{0}=\proj_{H}\left(z\right)$
for some $z\in K$, and since $v\in H$ we have 
\[
\left\langle z_{0},v\right\rangle =\left\langle z,v\right\rangle \le\left\langle x,v\right\rangle =\left\langle \proj_{H}x,v\right\rangle .
\]
For every $\epsilon>0$ the point $z_{\epsilon}=\proj_{H}x+\epsilon v$
satisfies 
\[
\left\langle z_{\epsilon},v\right\rangle =\left\langle \proj_{H}x,v\right\rangle +\epsilon\left|v\right|^{2}>\left\langle \proj_{H}x,v\right\rangle ,
\]
 so $z_{\epsilon}\notin K_{0}$. Therefore $\proj_{H}x\in\partial K_{0}$,
so indeed $x\in\partial K_{0}\times H^{\perp}$ as claimed. 

Now we prove that $K_{0}\times H^{\perp}\subset K$, finishing the
proof. Indeed, for every $x\in K_{0}\times H^{\perp}$ consider 
\[
A=\left\{ t\in[0,1]:\ tx\in K\right\} .
\]
 We know that $0\in A$ and $A$ is closed. Define $t_{0}=\max A$.
If $t_{0}<1$ it follows that $t_{0}x\in\partial K\subset\partial\left(K_{0}\times H^{\perp}\right)$,
which is impossible since $x\in K_{0}\times H^{\perp}$. Hence $t_{0}=1$
and $x\in K$ as claimed. 
\end{proof}

\medskip

\textbf{Proof of Corollary \ref{strong-equal}.} We may approximate $K$ with a convex set whose boundary is $C^2$ arbitrarily closely. We let $\sigma$ so that
$$\sigma^c=\{i\in [n]:\,x_i=y_i\},$$
$$H=\operatorname{span}\{e_i:\, i\in\sigma^c\},$$
and 
$$\Omega=\{x\in \partial K:\, n_x\not\in H^{\perp}\}.$$

We see that the assumption of Theorem \ref{main-thm-strong-global} is satisfied for an arbitrarily small $\epsilon>0.$ Pick $\alpha=\beta=\delta=\epsilon^{\frac{1}{10}}$ and let $\epsilon\rightarrow 0.$ Note that as $\epsilon\rightarrow 0,$ we might get different $z=z(\epsilon)$ in the conclusion of Theorem \ref{B-strong}, but as $z\in [x,y]$, then by compactness we may select a convergent sub-sequence of $z(\epsilon)$ to some point $z_0.$ We conclude that either $r(e^{z_0}K)=0$ (in which case $K$ has an empty interior), or $r(e^{z_0}K)=\infty$ (in which case $K=\R^n$), or $\gamma^+(\Omega)=0$. 
Since $\gamma_{\partial K}$ is absolutely continuous with respect to the Hausdorff measure on $\partial K,$ we conclude that in this case, for almost every $x\in\partial K,$ we have $n_x\in H^{\perp}.$ The conclusion follows from Lemma \ref{lemma-measure-zero}. $\square$

\bibliographystyle{plain}
\bibliography{Bstability}

\end{document}